\documentclass[12pt]{article}

\usepackage[dvipsnames]{xcolor}
\usepackage{pdfpages}
\usepackage{amsmath}
\usepackage{amsfonts}
\usepackage{upgreek}
\usepackage{sgame}
\usepackage{accents}
\usepackage{amssymb}
\usepackage{tikz-cd}
\usepackage{float}
\usepackage[numbers]{natbib}   % omit 'round' option if you prefer square brackets
\usepackage{dutchcal}
\usepackage{pdfpages}
\usepackage{amsmath}
\usepackage{amsfonts}
\usepackage{upgreek}
\usepackage{sgame}
\usepackage{mathtools}
\usepackage{amsthm}
\usepackage{amssymb}
\usepackage{enumitem}
\usepackage{epigraph}
\usepackage{sectsty}
\sectionfont{\color{MidnightBlue}}
\subsectionfont{\color{MidnightBlue}}
\subsubsectionfont{\color{MidnightBlue}}

\newtheorem{theorem}{Theorem}[section]
\newtheorem{proposition}[theorem]{Proposition}
\newtheorem{lemma}[theorem]{Lemma}
\newtheorem{corollary}[theorem]{Corollary}
\newtheorem{claim}[theorem]{Claim}

\theoremstyle{definition}

\newtheorem{definition}[theorem]{Definition}
\newtheorem{example}[theorem]{Example}

\newtheorem{question}[theorem]{Question}

\newtheorem{remark}[theorem]{Remark}

\newtheorem{aside}[theorem]{Aside}
\newtheorem{note}[theorem]{Note}

\iffalse
\newtheorem{theorem}{Theorem}
\newtheorem{corollary}[theorem]{Corollary}
\newtheorem{lemma}[theorem]{Lemma}

\newtheorem{definition}[theorem]{Definition}

\newtheorem{claim}[theorem]{Claim}

\newtheorem{remark}[theorem]{Remark}
\usepackage{makeidx}
\fi

\usepackage{hyperref}
\hypersetup{
    colorlinks=true,
    linkcolor=Maroon,
    filecolor=magenta,      
    urlcolor=Mulberry,
    citecolor=MidnightBlue,
}

%\addbibresource{sample.bib}

\setlength{\oddsidemargin}{0.25in}
\setlength{\evensidemargin}{0.25in}
\setlength{\textheight}{8.9in}
\setlength{\textwidth}{6.2in} \setlength{\topmargin}{-0.4in}
\providecommand{\keywords}[1]{\textbf{\textit{Keywords:}} #1}
\providecommand{\jel}[1]{\textbf{\textit{JEL Classifications:}} #1}
\bibliographystyle{plainnat}

\makeatletter
\makeatother

\begin{document}

\author{Artem Hulko\thanks{Department of Mathematics and Statistics, University of North Carolina at Charlotte} \and Mark Whitmeyer\thanks{Department of Economics, University of Texas at Austin \newline Email: \href{mailto:mark.whitmeyer@utexas.edu}{mark.whitmeyer@utexas.edu}. \newline This paper has benefited immensely from comments and suggestions by David Angell, V. Bhaskar, William Fuchs, Rosemary Hopcroft, Vasudha Jain, Vasiliki Skreta, Isaac Sonin, Yiman Sun, Joseph Whitmeyer, and Thomas Wiseman. All remaining errors are, naturally, our own.} }

\title{\color{MidnightBlue} A Game of Random Variables}

\date{\today{}}

\maketitle

\begin{abstract}

This paper analyzes a simple game with $n$ players. We fix a mean, $\mu$, in the interval $[0, 1]$ and let each player choose any random variable distributed on that interval with the given mean. The winner of the zero-sum game is the player whose random variable has the highest realization. We show that the position of the mean within the interval is paramount. Remarkably, if the given mean is above a crucial threshold then the unique equilibrium must contain a point mass on $1$. The cutoff is strictly decreasing in the number of players, $n$; and for fixed $\mu$, as the number of players is increased, each player places more weight on $1$ at equilibrium. We characterize the equilibrium as the number of players goes to infinity.
\end{abstract}
\keywords{Random Variables; Bayesian Persuasion; Multiple Senders; Information Transmission.}\\
\jel{C72; D82; D83}

\newpage

\section{Introduction}

Consider the following problem faced by a group of $n$ manufacturers of some good. Each manufacturer produces the same undifferentiated product, which sells for some exogenously given price. The manufacturers are constrained by the production process into producing goods of the same given average quality; however, they can choose the distribution of the good's quality--by being, say, more by-the-book and rigid a producer can ensure a more constant quality; or by being more flexible and hands-off he can achieve a wider spread of quality realizations. There is some buyer who wants to buy the good, and she naturally would like the good with the highest quality. Moreover, before she makes her choice of product, she may inspect the goods in order to accurately pick out the best one. What distribution over qualities should a producer choose in order to maximize the change that his product is best? 

One might suspect, \textit{ex ante}, that optimal choice of distribution would be the distribution with the highest variance. However, as we show in this paper, that is not the case, and instead it is the uniform distribution that is king.

This model can be formalized as the following $n$-player game. We fix a mean, $\mu$ and have each player simultaneously choose the distribution of a random variable with realizations constrained to a common interval--without loss, $[0,1]$--such that the expectation of the random variable is the given $\mu$. The winner of this zero-sum game is the player with the highest realization of his random variable. Each player's objective; therefore, is to maximize the probability that the realized value of his random variable is higher than that of his opponents. %Without loss, we stipulate that the random variable simply be the identity random variable. Accordingly, each player's strategy consists simply of a choice of distribution supported on a subset of $[0,1]$ such that the expectation of the random variable under that distribution is the given $\mu$. 

Our results are as follows. There is a unique symmetric equilibrium: if $\mu = 1/n$ then both players play a distribution with cdf $F(x) = x^{1/(n-1)}$ supported on the whole interval $[0,1]$, and if $\mu < 1/n$ they each play a distribution with the same curvature on a smaller interval of support, $[0,n\mu]$. The key is that for $j \neq i$, the distribution of $\max_{j \neq i} X_{j}$ is the uniform distribution. On the other hand, if the given mean is greater than $1/n$ then each player places a point mass on $1$ and the remainder of the distribution is continuous, supported on a subset of $[0,1]$. Holding $\mu$ fixed, as the number of players increases, the weight placed on $1$ increases. As $n$ goes to infinity, the unique symmetric equilibrium converges to one where each player chooses a distribution consisting of two point masses on $0$ and $1$. Finally, we show that these results can be extended to a modified case where the maximal support of the distribution is any interval that is a subset of $[0,+ \infty)$.

\subsection{Applications and Discussion}

We believe that this model has numerous applications. Perhaps foremost among these uses is that of competitive information design, or persuasion. This setup models the situation where a group of principals compete over information provision to a risk neutral agent where the principals and the agent share a common prior with binary support. Indeed, our problem is equivalent to one where agents each choose an experiment in the Kamenica and Gentzkow \cite{kam} sense, given the common binary prior. Real world examples of this problem include sellers choosing how much information to convey about their products to a buyer with unknown tastes, schools competitively choosing grading policies for their respective candidates (as in \cite{bol}), and political parties attempting to persuade a voter to choose their candidate (as in \cite{Albrecht}).

More generally, when the probability space is one dimensional this problem of choosing a signal structure becomes one of choosing a Blackwell experiment (see \cite{blackwell}).\footnote{This approach was introduced as a method for examining persuasion problems in Gentzkow and Kamenica \cite{gent}, and a number of other papers have followed that interpret the information design problem as one of choosing an optimal Blackwell experiment. Some other recent examples include \cite{kol} and \cite{skreta}.} As a result, this paper can be interpreted as solving the same problem but with fewer constraints. Put a different way, while the general competitive persuasion question looks for the equilibrium where each player chooses a distribution of posteriors subject to the constraint that the distribution of posteriors be Bayes-plausible, this paper looks for the equilibrium where the constraints are now the two conditions that the maximal support and mean of the posterior and the prior must be the same.

For a prior distribution with binary support, our formulation is precisely the competitive persuasion problem, without any loss of generality. However, it is easy to see that this is not generally true for other prior distributions. For instance, suppose that the prior consisted merely of a weight of $1$ on $1/2$. Obviously, there is no signal structure that could convince a decision maker that the expected value of the object were anything other than $1/2$, and certainly we could not achieve a distribution on posteriors that yielded a uniform distribution on expected values. This intuition is apposite for any distribution with support on $n \neq 2$ points, where $n$ is allowed to be uncountably infinite (i.e. where the prior is non-atomic)--if there is too much weight towards the barycenter of the prior, a uniform distribution of expected rewards is simply not Bayes-plausible.

In part, this paper bears resemblance to Condorelli and Szentes \cite{Condor}. In \cite{Condor}, the authors characterize the equilibrium of a simple game consisting of a monopolistic seller and a buyer. The buyer may choose the cdf of her valuation; and then the seller, after observing the distribution but not the realization, makes an offer to the buyer. This of course is related to Roesler and Szentes \cite{Roes}, where instead of observing her valuation, the buyer merely receives a signal about it. The authors then characterize the properties of this buyer-optimal signal structure. Concordantly, here, we look at the (relatively) unconstrained problem, which provides insight as to the solution to the general competitive persuasion problem.

The approach taken in this paper differs greatly from that taken by other papers in the information design literature. Because the distribution chosen by our players is free from the Bayes' plausibility constraints, we need not utilize the usual approaches--where one concavifies the value function  or uses the experiments-as-convex-functions idea \cite{gent, kol, skreta}. Instead, we can solve for the equilibrium of the game directly, in part using insights gleaned from Hulko and Whitmeyer \cite{Hulko}. The unique equilibrium here has the same intuitive property as the unique equilibrium of the two-player dice game in \cite{Hulko}. As shown by Hulko and Whitmeyer, the famous non-transitivity of generalized dice\footnote{Or, ``Efron's Dice"; see \cite{Con, Savage, Tenney}.}, results in cycles of best responses a la ``Rock-Paper-Scissors"; e.g. die $A$ beats die $B$ beats die $C$ beats die $A$. The only die impervious to this is the standard die (the analog of the uniform distribution in this paper), which guarantees a payoff of $1/2$ to both players. Thus, while the standard die does not beat any other die, it does not lose to any other die either.

In this paper, we arrive at a similar result in the two-player case, which is a corollary of our $n$-player result. Indeed, this result for two-players features as a lemma (Lemma 4.1) in Boleslavsky and Cotton (2015) \cite{cotton}, who look at a two-player problem in which schools must choose students to admit to their respective schools and then design a competitive grading policy. For a given mean less than or equal to $1/2$, the unique equilibrium is the uniform distribution, which guarantees a payoff of at least $1/2$ to the player who chooses it. Then, when $\mu > 1/2$, we have to modify this somewhat because the uniform distribution is no longer an equilibrium. Surprisingly, the unique equilibrium must have a point mass at $1$ and then a portion with a linear distribution.

For a general number of players $n$, the crucial cutoff is $1/n$; and moreover, the equilibrium distribution is the $(n-1)$-th root of $x$. Thus, as $n$ grows the continuous portion of the distribution becomes increasingly concave. As in the two-player case, if the mean is too high (greater than $1/n$) then at equilibrium each player puts a point mass on $1$. Then, for a fixed mean, as the number of players increases, each player places increasingly more weight as a point mass on $1$. As $n$ goes to infinity, the equilibrium distribution converges to one consisting merely of point masses on $1$ and $0$, of weight $\mu$ and $1-\mu$, respectively.

Finally, we would be remiss should we not mention the other papers in the realm of competitive information design. The area of Bayesian Persuasion and information design is growing rapidly, but there are still relatively few papers that look at competitive information provision. As mentioned above, this paper generalizes a result of Boleslavsky and Cotton \cite{cotton}, and the methods used in this paper--the use of calculus of variations techniques to directly solve for the equilibrium distribution--are novel in the literature. Moreover, because we derive a full characterization of the equilibrium for a general $n$, we are able to fully characterize the effect of an increase in population size on competitive information provision.\footnote{It has recently come to our attention that Spiegler \cite{spieg}, in looking at firms selling to a boundedly rational consumer, formulates a model that is extremely similar to ours in which the given mean is endogenous. The equilibrium in that study as well as the resulting insights are virtually the same as the results here, although our approach is novel.}

An additional paper similar to this one is that by Koessler, Laclau, and Tomala \cite{Koessler}. Each player in their game designs a signal structure that contains information about their respective (independent) pieces of information, with the goal of persuading the decision-maker to take the player's preferred action. Similarly, in a pair of papers, Au and Kawai \cite{Au1, Au2} also look at the situation where a number of persuaders compete through information provision. Board and Lu \cite{Board}, in turn, look at information in a search setting. In concert with the insight derived in \cite{Au2} and \cite{Board} we establish that competition elicits greater information provision. In another paper, Boleslavsky and Cotton \cite{bol} look at another two-player game of competitive persuasion in which agents provide information in order to secure funding for proposals. Similarly, Albrecht \cite{Albrecht}, looks a two-player game consisting of two parties vying for the support of a voter. His paper is another real world manifestation of the two-player case of our problem.%Just as full disclosure is achieved as the number of players (\cite{Au2}) goes to $\infty$, or as  search costs (\cite{Board}) go to $0$, we see that as the number of players goes to infinity, . Instead, here, increasing the number of players merely results in an equilibrium nonexistence result.

\subsection{Formulating the Problem}

Formally, let the sample space, $\Omega$, be the closed interval $[0,1]$ and $\mathcal{B}$ be the $\sigma$-Algebra of Borel sets on $[0,1]$. Define random variable $X_{i}$, $i \in \big\{1,2,\dots, n\big\}$ as the identity random variable.  We define a strategy for a player $i$ as follows:

\begin{definition}
Fix mean $\mu \in (0,1)$. A \textcolor{Maroon}{Strategy} for player $i$ consists of a choice of probability distribution $F_{i}$ such that $\mathbb{E}_{F_{i}}\big[X_{i}\big] = \mu$. Write a player $i$'s strategy as the duple $S_{i} \coloneqq (F_{i},X_{i})$.
\end{definition}

The game we shall analyze is constant sum and symmetric; and the payoff for Player $1$, $u_{1}(S_{1},S_{-1})$, is given by

\[\begin{split}
    u_{1}(S_{1},S_{-1}) = \Pr(X_{1} > \max_{j \neq 1}(X_{j}) &+ \frac{1}{n}\Pr(X_{1} = X_{2} = \dots = X_{n})\\ 
    &+ \binom{n-1}{n-2}\frac{1}{n-1}\Pr(X_{1} = X_{2} = \dots = X_{n-1} > X_{n})\\
    &\vdots\\
    &+ \binom{n-1}{1}\frac{1}{2}\Pr(X_{1} = X_{2} > \max_{j \neq 1,2}X_{j})
\end{split}\]

In other words, players want their random variable to have the highest realization, and ties are broken fairly. Before continuing we wish to make the following remark,

\begin{remark}
The set of mixed strategies is equal to the set of pure strategies.
\end{remark}

To see this, note that every mixed strategy consists of some randomization over a set of pure strategies, which is a probability distribution over probability distributions of the random variable. However, this itself is clearly a probability distribution of the random variable and so it is a pure strategy.

\section{The \texorpdfstring{$n$}{TEXT}-Player Game}

First, we write the following lemmata. The intuition behind them is straightforward; simply, there are no symmetric equilibria in which players choose discrete distributions. A player may always deviate profitably from such distributions by shaving a small amount of weight from the highest point in the support and moving this weight to the other points in the support.

\begin{lemma}\label{discrete}
There are no symmetric Nash Equilibria where players choose discrete distributions supported on $N(<\infty)$ points. 
\end{lemma}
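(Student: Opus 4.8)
The plan is to argue by contradiction: suppose there is a symmetric equilibrium in which every player uses a discrete distribution $F$ supported on finitely many points $0 \le a_1 < a_2 < \dots < a_N \le 1$, with weights $p_1, \dots, p_N > 0$. The key observation is that against $n-1$ opponents all playing $F$, the payoff to a player who puts an atom on a point $x$ is governed by the probability that $x$ ties or beats the maximum of the opponents' draws. I would first write down, for a single deviating player, the marginal payoff of placing probability at a point $x$: if $x$ is not an atom of $F$ it is essentially $G(x) := \Pr(\max_{j\neq 1} X_j < x) = F(x)^{n-1}$ (the probability of strictly winning), whereas if $x = a_k$ is an atom, the contribution is reduced because ties at $a_k$ are split, so the effective "value per unit mass at $a_k$" is strictly less than $F(a_k)^{n-1} = \big(\sum_{\ell \le k} p_\ell\big)^{n-1}$ — strictly, because the tie term $\frac1m(\cdot)$ with $m\ge 2$ discounts the mass the opponents pile on $a_k$.

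Next I would exploit this gap. Let $a_N$ be the top atom. Its value per unit mass is strictly below $F(a_N^-)^{n-1}\cdot(\text{something})$ — more precisely, moving an $\varepsilon$ sliver of mass from $a_N$ to a point $a_N - \delta$ just below $a_N$ but above $a_{N-1}$ (or, if $a_N$ is isolated at the top with nothing below, to any point in $(a_{N-1}, a_N)$) strictly increases the deviator's payoff: at the new point the sliver wins outright with probability $F(a_N - \delta)^{n-1}$ with no tie discount, and since $F$ is constant on $(a_{N-1}, a_N)$ we have $F(a_N-\delta)^{n-1} = F(a_N^-)^{n-1}$, which strictly exceeds the tie-discounted value the sliver earned at $a_N$. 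One must also check the mean constraint: shifting mass down from $a_N$ lowers the mean, so I would simultaneously shift a compensating (smaller, since $a_N$ is the largest point) amount of mass up from the bottom atom $a_1$ toward $a_1+\delta'$; this second move is also weakly profitable (it cannot hurt, as moving mass up weakly increases win probability), and for small $\varepsilon$ the two adjustments can be balanced to preserve $\mathbb{E}[X] = \mu$ while keeping the total deviation strictly profitable.

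The step I expect to be the main obstacle is making the tie-discount argument fully rigorous and handling the boundary/degenerate configurations: e.g. when $N=1$ (a point mass at $\mu$, where the deviation is to spread mass symmetrically and win with probability $>1/n$ against the atom), when $a_N=1$ (one can still shift mass down and the comparison point is strictly below $1$), and ensuring there is always "room" below the top atom — if $a_N$ and $a_{N-1}$ are the top two atoms, the open interval between them is nonempty and $F$ is flat there, which is exactly what the argument needs. I would organize these as a short case analysis, but the core computation — comparing $\frac{1}{m}F(a_N)^{n-1} + (\text{lower-order terms from beating below } a_N)$ against $F(a_N^-)^{n-1}$ — is the same in every case, so the write-up should remain compact.
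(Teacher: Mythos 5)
Your key inequality at the top atom is backwards. Fix the symmetric profile and let $p_N$ be the weight on $a_N$, so $F(a_N^-)=1-p_N$. A unit of the deviator's mass sitting \emph{at} $a_N$ earns
\[
V(a_N)\;=\;\sum_{i=0}^{n-1}\binom{n-1}{i}\,\frac{1}{n-i}\,p_N^{\,n-1-i}\,(1-p_N)^{i}
\;=\;(1-p_N)^{n-1}+\text{(strictly positive tie terms)},
\]
whereas a unit sitting at $a_N-\delta\in(a_{N-1},a_N)$ earns exactly $(1-p_N)^{n-1}$. Hence $V(a_N)>F(a_N^-)^{n-1}$, not less: at $a_N$ you win outright just as often as at $a_N-\delta$, \emph{and in addition} collect a positive (if discounted) share of the ties. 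A discounted tie is still better than the outright loss you trade it for by ducking below the atom, so the sliver move you propose strictly \emph{decreases} the deviator's payoff on its own. The ``tie discount'' intuition you cite correctly gives $V(a_N)<F(a_N)^{n-1}$ --- sitting just \emph{above} $a_N$ converts every tie into a win --- but you have conflated $F(a_N)^{n-1}$ with $F(a_N^-)^{n-1}$, and these two bounds sandwich $V(a_N)$ from opposite sides.

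This is a genuine gap, not a cosmetic one: all of the profit in your construction comes from the bottom swap (lifting mass off $a_1$ to $a_1+\delta'$ gains $\tfrac{n-1}{n}p_1^{n-1}$ per unit), and you would have to choose $\delta/\delta'$ large enough that this gain strictly dominates the top loss, subject to the feasibility bound $m_2=\varepsilon\delta/\delta'\le p_1$. Nothing in your argument verifies this, and your remark that the compensating mass is ``smaller'' actually points in the wrong direction --- the repair requires it to be large relative to $\varepsilon$. The paper instead moves mass the locally profitable way: it leaves $a_N$ in place with weight reduced by $\epsilon$ and raises every \emph{lower} atom $a_j$ to $a_j+\eta$ (with added weight $\epsilon_j$), so that tied draws at each $a_j$ become outright wins; then both adjustments help, the mean constraint is easy to satisfy, and profitability follows by taking the $\epsilon_j$ small. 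Your $N=1$ subcase also yields only a weak improvement when $n=2$ (spreading symmetrically about $\mu$ gives exactly $1/2$); the paper's asymmetric perturbation ($\mu+\eta$ with mass $1-\epsilon$ and $0$ with mass $\epsilon$) gives a strict one for all $n$.
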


\begin{proof}

It is easy to see that there is no symmetric equilibrium in which each player chooses a distribution consisting of a single point mass. Such a distribution could only consist of distribution with weight $1$ placed on $\mu$ and would yield to each player a payoff of $1/n$. However, there is a profitable deviation for a player to instead place weight $1 - \epsilon$ on $\mu + \eta$ and weight $\epsilon$ on $0$ ($\epsilon, \eta > 0$). In doing so, this player could achieve a payoff arbitrarily close to $1$.

Now, assume $\infty > N \geq 2$. Observe that a strategy consists of a choice of probabilities $\big\{p_{1}, p_{2}, \dots, p_{N}\big\}$, $p_{i} \in [0,1]$ $\forall i$,\quad $\sum_{i=1}^{N}p_{i} = 1$ \quad and support \quad $a_{1} < a_{2} < \cdots < a_{N} \in [0,1]$ \quad such that $\sum_{i=1}^{N} a_{i}p_{i} = \mu$. %First we show that $a_{N}$ must be $1$. We prove by contradiction. Suppose that there is a Nash Equilibrium where $a_{N} < 1$ and call the equilibrium strategy $E$. 

The expected payoff to each player from playing an arbitrary strategy, $S_1 = S_2=\dots=S_n = E$, is

\[
    u_i(S_i,S_{-i})  = \sum_{j=0}^{N-1}\left(\sum_{i=0}^{n-1} \binom{n-1}{i} \frac{1}{m-i} p_{N-j}^{n-i}\left(\sum_{k=1}^{N-j-1}p_k\right)^i\right) 
\]

We claim deviating to the following strategy is profitable: $S_1' $ where $a_{N}^{'} = a_{N}$ is played with probability $p_{N} - \epsilon$ and $a_{j}^{'} = a_j+\eta$ is played with probability $p_{j} + \epsilon_{j}$, for $j \neq N$, where $\sum_{j}^{N-1}\epsilon_{j} = \epsilon$ (Again, $\epsilon, \eta, \epsilon_{j} > 0$ $\forall j$).\footnote{Note that we can always find such an $\eta > 0$.} The expected payoff to player 1 playing strategy $S_1'$ is

\[\begin{split}
u_{1}(S_1',S_{-1}) &= \sum_{j=0}^{N-1}\left(\sum_{i=0}^{n-1} \binom{n-1}{i} \frac{1}{n-i} p_{N-j}^{n-i}\left(\sum_{k=1}^{N-j}p_k\right)^i\right)\\
&- \epsilon \sum_{i=0}^{n-1} \binom{n-1}{i} \frac{1}{n-i} p_{N}^{n-i-1} \left( 1-p_N \right)^i\\
&+\sum_{j=1}^{N-1}\left(\sum_{i=0}^{n-1} \binom{n-1}{i} \frac{n-i-1}{n-i} p_{N-j}^{n-i}\left(\sum_{k=1}^{N-j}p_k\right)^i\,\right)
\end{split}\]
Note that the deviation is profitable for the player 1 if
\[
\epsilon \sum_{i=0}^{n-1} \binom{n-1}{i} \frac{1}{n-i} p_{N}^{n-i-1} \left( 1-p_N \right)^i < \sum_{j=1}^{N-1}\left(\sum_{i=0}^{n-1} \binom{n-1}{i} \frac{n-i-1}{n-i} p_{N-j}^{n-i}\left(\sum_{k=1}^{N-j}p_k\right)^i\,\right)\,,
\]

Which holds for a sufficiently small vector $(\epsilon_{1},...,\epsilon_{N-1})$.

\end{proof}

Moreover, we can also show that there can be no distributions with point masses on any point in $[0,1)$; to wit,

\begin{lemma}
There are no symmetric Nash equilibria with point masses on any point in the interval $[0,1)$. Moreover if $\mu < 1/n$, then all symmetric equilibria must be atomless.
\end{lemma}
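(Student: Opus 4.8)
The plan is to prove the two claims in order, using the first to set up the second.

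\emph{No atoms in $[0,1)$.} Suppose toward a contradiction that a symmetric equilibrium $F$ has an atom of weight $p>0$ at some $a\in[0,1)$; since $F$ is a best reply to itself, it suffices to produce a strictly profitable deviation. Write $\pi(x)$ for the payoff to the pure strategy ``realize $x$'' against $n-1$ opponents all using $F$, so that $u_1(G,F^{n-1})=\int\pi\,dG$. The decisive fact is that $\pi$ jumps \emph{upward} across the atom: realizing exactly at $a$ one merely splits the prize with the $K\sim\mathrm{Bin}(n-1,p)$ opponents who are also at $a$, whence
\[
\pi(a)=\sum_{k=0}^{n-1}\binom{n-1}{k}p^{k}F(a^{-})^{\,n-1-k}\frac{1}{k+1}\;<\;\bigl(F(a^{-})+p\bigr)^{n-1}=F(a)^{n-1},
\]
so the gap $\Delta:=F(a)^{n-1}-\pi(a)>0$ (the $k=n-1$ term alone contributes $p^{n-1}(n-1)/n$), while $\pi(a+\eta)=F(a+\eta)^{n-1}\downarrow F(a)^{n-1}$ as $\eta\downarrow0$ through continuity points of $F$. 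Deviate by moving essentially all of the atom from $a$ up to $a+\eta$, restoring the mean with a compensating shift of mass of order $\eta$: if $a>0$, relocate a piece of weight $\delta=p\eta/(a+\eta)$ of the atom down to $0$; if $a=0$, instead shave a sliver of weight $O(\eta)$ off $F$ restricted to $(\mu/2,1]$ (which carries positive mass since $\mathbb E_F[X]=\mu$) and relocate it to $0$. In both cases the deviator's payoff changes by $p\Delta-O(\eta)+o(1)$, which is strictly positive for all sufficiently small $\eta$ — a contradiction.

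\emph{Atomlessness when $\mu<1/n$.} By the previous paragraph a symmetric equilibrium $F$ is atomless on $[0,1)$, so its only possible atom is at $1$, say of weight $p\in(0,1)$ ($p=1$ is impossible, as it would force $\mathbb E_F[X]=1\neq\mu$). On $[0,1)$ we then have $\pi(x)=F(x)^{n-1}$, continuous, and at the endpoint $\pi(1)=\frac{1-(1-p)^{n}}{np}$; since $(1-p)^{n-1}\bigl(1+(n-1)p\bigr)\le1$ we get $\pi(1)\ge\pi(1^{-})$, so $\pi$ is upper semicontinuous on $[0,1]$. As $F$ is optimal subject only to the mean constraint, the standard concavification argument applies: $\mathrm{supp}(F)$ lies on a single line $\ell$ (the relevant affine piece of the concave envelope of $\pi$), and since $\int\pi\,dF$ equals the value $1/n$ of this symmetric constant-sum game, the point $(\mu,1/n)$ lies on $\ell$ as well. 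Now $1\in\mathrm{supp}(F)$ (it carries an atom) and $\underline x:=\inf\mathrm{supp}(F)$ satisfies $\underline x<1$ and $\pi(\underline x)=0$ (almost surely some opponent lies strictly above $\underline x$), so $\ell$ is the chord joining $(\underline x,0)$ and $(1,\pi(1))$. Imposing $(\mu,1/n)\in\ell$ gives
\[
\mu=\underline x+\frac{1-\underline x}{n\,\pi(1)}.
\]
Finally Bernoulli's inequality $(1-p)^{n}\ge1-np$ yields $\pi(1)\le1$, and strictly $\pi(1)<1$ for $n\ge2$ and $p\in(0,1)$, so $\mu>\underline x+\frac{1-\underline x}{n}\ge\frac1n$, contradicting $\mu<1/n$. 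Together with the first part, every symmetric equilibrium is atomless.

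\emph{Main obstacle.} The first part is elementary but delicate in its bookkeeping: the difficulty is arranging the mean-preserving compensation so that its payoff cost is of strictly lower order than the first-order gain $p\Delta$ from refusing to share the tie — in particular one cannot move a bounded chunk of the atom downward past other atoms, which is why the compensating mass is sent to $0$ or shaved off the tail. In the second part the only subtle step is the appeal to the concavification/duality characterization of the optimal mean-constrained distribution, which is legitimate precisely because the first part supplies the upper semicontinuity of $\pi$; after that, $\mu>1/n$ drops out of Bernoulli's inequality in one line.
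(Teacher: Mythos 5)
Your argument is correct, and it splits cleanly into a part that parallels the paper and a part that genuinely departs from it. For the first claim (no atom in $[0,1)$), you run the same deviation the paper does — shift the atom upward by $\eta$ and compensate the mean with a sliver of mass sent to $0$ — but you do it more carefully: you make the compensating mass explicit ($\delta=p\eta/(a+\eta)$ when $a>0$), you supply a separate fix for the $a=0$ case that the paper dismisses as ``clear,'' and you isolate the key quantity $\Delta=F(a)^{n-1}-\pi(a)>0$ so the first-order gain $p\Delta$ visibly dominates the $O(\eta)$ compensation cost. For the second claim, however, your route is genuinely different. The paper simply defers ``no atom at $1$ when $\mu\le 1/n$'' to later sections, whereas you give a self-contained proof by concavification: upper semicontinuity of $\pi$ (which you verify via $(1-p)^{n-1}(1+(n-1)p)\le 1$) lets you place $\mathrm{supp}(F)$ on a single supporting line through $(\underline{x},0)$ and $(1,\pi(1))$, the equilibrium value $1/n$ pins down the line at $(\mu,1/n)$, and Bernoulli's inequality forces $\mu>1/n$. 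This is notable because the paper explicitly says it avoids the concavification machinery on the grounds that the Bayes-plausibility constraint is absent — but that absence is exactly what makes the mean-constrained concavification apply cleanly here, so your shortcut is legitimate and arguably more illuminating than the paper's forward reference. The one point worth making explicit in your write-up is why $\pi(\underline{x})=\ell(\underline{x})$: the concavification identity $\pi=\ell$ holds $F$-a.s., and you need the continuity of $\pi$ on $[0,1)$ (which follows from $F$ having no atoms there, by the first part) to extend it to the boundary point $\underline{x}$ of the support.
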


\begin{proof}
Using an analogous argument to that used in Lemma \ref{discrete}, it is easy to see that there cannot be multiple point masses. Accordingly, it remains to show that there cannot be a single point mass. First, we will show that there cannot be an atom at any point $b \in (0,1)$. Suppose for the sake of contradiction that that there is a symmetric equilibrium where each player plays a point mass of size $p$ on point $b$. That is, each player plays strategy $S$ that consists of a distribution $F$ and a point mass of size $p$ on point $b$. Let $H(x) = F^{n-1}$. Then, player $1$'s payoff is

\[u_{1}(S_{1}, S_{-1})= \int_{0}^{1}\int_{0}^{y}h(x)f(y)dxdy + p\sum_{i=0}^{n-1}\binom{n-1}{i}\bigg(\frac{1}{n-i}\bigg)F(b)^{i}p^{n-1-i}\]

Then, let player $1$ deviate by introducing a tiny point mass of size $\epsilon$ at $0$ and moving the other point mass to $b + \eta$ and reducing its size slightly to $p - \epsilon$ ($\epsilon, \eta > 0$); call this strategy $S_{1}^{'}$. The payoff to player $1$ is

\[u_{1}(S_{1}^{'}, S_{-1})= \int_{0}^{1}\int_{0}^{y}h(x)f(y)dxdy + (p-\epsilon)\sum_{i=0}^{n-1}\binom{n-1}{i}F(b + \eta)^{i}p^{n-1-i}\]

Suppose that this is not a profitable deviation. This holds if and only if

\[\begin{split}
    p\sum_{i=0}^{n-1}\binom{n-1}{i}\bigg(\frac{1}{n-i}\bigg)F(b)^{i}p^{n-1-i} &\geq (p-\epsilon)\sum_{i=0}^{n-1}\binom{n-1}{i}F(b + \eta)^{i}p^{n-1-i}\\
\end{split}\]

Or,

\[\begin{split}
    \epsilon p^{n-1} &+ p\sum_{i=1}^{n-1}\binom{n-1}{i}\bigg(\frac{1}{n-i}\bigg)F(b)^{i}p^{n-1-i}\\
    &\geq \frac{n-1}{n}p^{n} + (p-\epsilon)\sum_{i=1}^{n-1}\binom{n-1}{i}F(b + \eta)^{i}p^{n-1-i}
\end{split}\]

Clearly, as $\epsilon$ and $\eta$ go to zero we achieve a contradiction. Hence, there is a profitable deviation and so this is not an equilibrium. It is clear that there cannot be an equilibrium with a point mass on $0$ and so we omit a proof. Finally, we may conclude from the analysis in the sections \textit{infra} that there may not be a point mass on $1$ if $\mu \leq 1/n$.
\end{proof}

As will become clear, the value of $\mu$ is important in determining the equilibrium of this game. We divide our analysis into the following two cases:

\begin{enumerate}[label=\textbf{\arabic*})]
    \item $\mu \geq \frac{1}{n}$ (Section \ref{=}); and
    \item $\mu < \frac{1}{n}$ (Section \ref{<}).
\end{enumerate}

\subsection{\texorpdfstring{$\mu \geq \frac{1}{n}$}{text}}\label{=}

The main result of this section is the following theorem:

\begin{theorem}\label{n>}
In the game with $n$ players, if $\mu \geq 1/n$ then the unique symmetric Nash equilibrium is for each player to play $\mathcal{F}_{i}$, defined as

\[F_{i}(x) = (1-a)\bigg(\frac{x}{s}\bigg)^{1/(n-1)} \hspace{1cm} \text{for} \hspace{.2cm} x \in [0, s]\]

where $a = \mu - \mu(1-a)^{n}$ and $s = n\mu(1-a)^{n-1}$; and $\Pr(X=1) = a$.
\end{theorem}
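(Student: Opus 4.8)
The plan is to pull the equilibrium distribution out of an optimality (first‑order) condition and then to verify, in reverse, that the distribution so obtained is in fact a symmetric equilibrium. Let $F$ be the common distribution in a symmetric equilibrium. By the two preceding lemmata $F$ is atomless on $[0,1)$, so the only admissible atom has weight $a:=\Pr_{F}(X=1)\ge 0$. Hence, against $n-1$ independent copies of $F$, the payoff from playing the degenerate distribution at a point $x\in[0,1)$ is $\phi(x):=F(x)^{n-1}$ (no tie occurs with positive probability), while playing the point $1$ yields, by the tie‑breaking rule and a routine binomial identity,
\[
\phi(1)=\sum_{k=0}^{n-1}\binom{n-1}{k}a^{k}(1-a)^{n-1-k}\,\frac{1}{k+1}=\frac{1-(1-a)^{n}}{na},
\]
(with $\phi(1)=1$ when $a=0$). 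Since the payoff of an arbitrary distribution $G$ of mean $\mu$ against $F^{\,n-1}$ equals $\int_{[0,1]}\phi\,dG$, the distribution $F$ is a best reply to itself if and only if there exist constants $\lambda,\nu$ with $\phi(x)\le\lambda x+\nu$ on $[0,1]$ and equality on $\operatorname{supp}F$ — the standard condition for maximizing a linear functional over the mean‑$\mu$ distributions on $[0,1]$ (a one‑moment problem; note $\phi$ is upper semicontinuous, so the maximum is attained).

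\emph{Necessity.} On $\operatorname{supp}F\cap[0,1)$ the condition reads $F(x)^{n-1}=\lambda x+\nu$. First I would show $\lambda>0$: otherwise $F$ is constant on its support, which forces a single atom and contradicts Lemma~\ref{discrete}. Given $\lambda>0$, the map $x\mapsto\lambda x+\nu$ is strictly increasing while $F$ is flat off its support, so $\operatorname{supp}F\cap[0,1)$ has no interior gap and is an interval $[m,s]$; evaluating the equality at $x=m$ (where $F$ has no atom, so $F(m)=F(m^-)$) together with the off‑support inequality as $x\downarrow 0$ forces $m=0$, and then $\nu=F(0)^{n-1}=0$ since there is no atom at $0$. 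Thus $F(x)=(\lambda x)^{1/(n-1)}$ on $[0,s]$ and $F\equiv(\lambda s)^{1/(n-1)}=:1-a$ on $[s,1)$. Three relations now close the system: since the game is symmetric and constant‑sum with total payoff $1$, the equilibrium payoff is $1/n$, so $\lambda\mu+\nu=1/n$, i.e.\ $\lambda=1/(n\mu)$; the left limit $F(1^-)=1-a$ equals $F(s)$, giving $s=n\mu(1-a)^{n-1}$; and the mean constraint $\int_{0}^{s}x\,dF(x)+a=\mu$, whose integral evaluates to $\tfrac{(1-a)s}{n}$, reduces after substitution to exactly $a=\mu-\mu(1-a)^{n}$. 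This is precisely $\mathcal F_{i}$.

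\emph{Well‑posedness.} To see that $a$ is uniquely determined, set $g(a):=a-\mu+\mu(1-a)^{n}$: then $g(0)=0$, $g$ is strictly convex on $[0,1]$, $g'(0)=1-n\mu\le 0$, and $g(1)=1-\mu>0$, so $g$ has a unique root in $[0,1]$, equal to $0$ exactly when $\mu=1/n$ (whence $s=n\mu=1$ and $F(x)=x^{1/(n-1)}$) and in $(0,1)$ when $\mu>1/n$. Writing $t:=1-a$, the root equation is $1-t=\mu(1-t^{n})$, i.e.\ $\mu=1/(1+t+\dots+t^{n-1})$, so $s=n\mu t^{n-1}=n t^{n-1}/(1+t+\dots+t^{n-1})\in(0,1]$.

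\emph{Sufficiency, and the hard step.} Conversely, for $F=\mathcal F_{i}$ with these $a,s$, a direct computation gives $\phi(x)=x/(n\mu)$ on $[0,s]$, $\phi(x)=(1-a)^{n-1}=s/(n\mu)\le x/(n\mu)$ on $(s,1)$, and $\phi(1)=\tfrac{1-(1-a)^{n}}{na}=1/(n\mu)$; hence the optimality condition holds with $\lambda=1/(n\mu)$ and $\nu=0$, and since $\mathcal F_{i}$ is a bona fide cdf of mean $\mu$, it is a symmetric Nash equilibrium. I expect the genuinely delicate step to be \emph{Necessity} — justifying the linear‑functional optimality criterion and extracting from it that the support meets $[0,1)$ in the \emph{entire} interval $[0,s]$, carries no atom below $1$, and has $\nu=0$. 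Once $F(x)=(\lambda x)^{1/(n-1)}$ is in hand, the binomial identity, the moment computation, and the analysis of $g$ are all routine.
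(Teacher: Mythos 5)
Your proof is correct, and it takes a genuinely different route from the paper's. The paper derives the form of $F_i$ by writing down an Euler--Lagrange functional and setting its variational derivative to zero, which gives $H(x)=\lambda_0+\lambda x$ on the support but says nothing about the endpoints or the atom; the paper then pins down $t=0$ (Lemma~\ref{lem7}), the lower bound $a\ge\mu[1-(1-a)^n]$ (Lemma~\ref{blah}), and finally the equality $a=\mu-\mu(1-a)^n$ (Lemma~\ref{lem8}) through three \emph{separate} explicit-deviation arguments (deviate to a two-point distribution, deviate to $\{0,s\}$, deviate to $\{s,1\}$, etc.), and it verifies sufficiency by yet another direct estimate culminating in the contradiction $K>K$. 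You instead invoke the single LP-duality/moment-problem criterion: $F$ is a best reply to $F^{n-1}$ iff $\phi\le\lambda x+\nu$ on $[0,1]$ with equality on $\operatorname{supp}F$. The off-support inequality is exactly what the paper's Lemma~\ref{lem7} re-derives by hand (it forces $m=0$ and $\nu=0$), the equality on the support gives the $(n-1)$th-root form and, at $x=1$, the value of $a$, and sufficiency becomes a one-line check of the same inequality in the other direction. This is tighter and avoids the paper's patchwork of deviation lemmas; the tradeoff is that you lean on strong duality for the one-moment problem, whereas the paper stays elementary (this matters to the authors, who explicitly advertise that their method avoids the concavification-adjacent machinery you are using). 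Your characterization of $\phi(1)$ and the binomial simplification match the paper's Appendix~\ref{a0}, and your evaluation $\int_0^s x\,dF=(1-a)s/n$ matches the computation inside the paper's Appendix~\ref{a1}.

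One small imprecision worth tightening: in your ``well-posedness'' paragraph you assert that $g(a)=a-\mu+\mu(1-a)^n$ ``has a unique root in $[0,1]$,'' but $g(0)=0$ identically, so for $\mu>1/n$ there are \emph{two} roots in $[0,1]$ ($0$ and one in $(0,1)$); your reparametrized identity $\mu=1/(1+t+\dots+t^{n-1})$ used to conclude $s\in(0,1]$ is only valid for $t\ne 1$, i.e.\ $a\ne 0$, so it does not by itself discard $a=0$. The fix is immediate and you already have the needed ingredient in hand: your necessity step produced $s=n\mu(1-a)^{n-1}$ together with the constraint $s\le 1$ (since $\operatorname{supp}F\cap[0,1)=[0,s]$), and $a=0$ would give $s=n\mu>1$ when $\mu>1/n$. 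Stating that explicitly closes the gap and makes the selection of the positive root of $g$ rigorous; with it, the convexity argument you gave then shows that positive root is unique.
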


\begin{proof}

First, we show that this is an equilibrium. Accordingly, we need to show that there can be no unilateral profitable deviation. Define $Z$ as $\max_{i \neq 1}X_{i}$, which, recall has a point mass on $1$. Moreover, define $H$ as the corresponding continuous portion of the distribution of $Z$; $H \coloneqq F_{i}^{n-1}$:

\[H(z) = (1-a)^{n-1}\frac{x}{s}\]

with associated density

\[h(z) = \frac{(1-a)^{n-1}}{s}\]

Evidently, it suffices to show that our candidate strategy achieves a payoff of at least $1/n$ to the player who uses it, irrespective of the strategy choice by the other players. Suppose for the sake of contradiction that there is a profitable deviation, that is, player $1$ deviates profitably by playing strategy $\mathcal{G}$. Clearly, we can represent $\mathcal{G}$ as having a point mass of size $c$, $0 \leq c \leq \mu$ on $1$ (naturally, if $c = 0$, then there is no point mass there). Written out, $\mathcal{G}$ consists of

\[\label{gee2}\tag{$\mathcal{G}$}
      G(y) \hspace{1cm} \text{for} \hspace{.2cm} x \in [0, 1)
\]

and $\Pr(Y=1)= c$. Define $K \coloneqq \int_{0}^{1} dG = 1-c$. Naturally, $K \leq 1$.  Then, player $1$'s utility from this deviation, $u_{1}(\mathcal{G},S_{-1})$, is\footnote{Note that the first term, $c(1-(1-a)^n)/na$, is derived below, in the proof of Lemma \ref{blah}.}

\[\begin{split}
    u_{1} &= c\frac{1 - (1-a)^{n}}{na} + (1-a)^{n-1}\big(K - G(s)\big) + \int_{0}^{s}\int_{0}^{y}h(x)g(y)dxdy\\
    &= \frac{c}{n\mu} + (1-a)^{n-1}\big(K - G(s)\big) + \int_{0}^{s}\int_{0}^{y}h(x)g(y)dxdy
\end{split}\]

Evidently, this is a profitable deviation if and only if $u_{1} > 1/n$; that is,

\[\begin{split}
    \frac{c}{n\mu} + (1-a)^{n-1}\big[K - G(s)\big] + \int_{0}^{s}\frac{(1-a)^{n-1}}{s}yg(y)dy &> \frac{1}{n}\\
\end{split}\]

After some clever manipulation\footnote{See Appendix \ref{a01} for the detailed derivation.}, this reduces to

\[\tag{$1$}\label{101}\begin{split}
    K &> \int_{s}^{1}\frac{1}{s}yg(y)dy + G(s)\\
\end{split}\]

It is clear that $\int_{s}^{1}\frac{1}{s}yg(y)dy \geq \int_{s}^{1}g(y)dy$ and thus we have

\[\begin{split}
    K > \int_{s}^{1}\frac{1}{s}yg(y)dy &+ G(s) \geq \int_{0}^{s}g(y)dy + \int_{s}^{1}g(y)dy = K
\end{split}\]

We have established a contradiction and thus the result is shown.

\end{proof}

It remains to show uniqueness. To do this, we derive the candidate strategy presented in the theorem above. First, through analogous arguments to those presented \textit{supra}, it is clear that there cannot be multiple mass points, nor can there be any mass point in the interval $[0,1)$. Hence, we allow for there to possibly be a point mass on $1$, and show it must satisfy the following inequality.

\begin{lemma}\label{blah}
Suppose that in a symmetric equilibrium each player puts a point mass of size $a \geq 0$ on $1$. Then, $a$ must satisfy $a \geq \mu\big[1 - (1-a)^{n}\big]$.
\end{lemma}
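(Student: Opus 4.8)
The plan is to derive the inequality from a single explicit deviation, using the fact — established in the preceding lemma — that any symmetric equilibrium distribution is atomless on $[0,1)$. Since the game is constant sum with total payoff $1$ and the equilibrium is symmetric, each player earns exactly $1/n$ in equilibrium, so it suffices to exhibit a deviation whose payoff equals $\mu\big[1-(1-a)^{n}\big]/(na)$ and invoke the requirement that this be at most $1/n$.

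First I would record the relevant ``conditional win probability at $1$.'' Against $n-1$ opponents who each independently realize $1$ with probability $a$ and something strictly below $1$ otherwise, a player who realizes $1$ beats every opponent below $1$ and splits a $(k+1)$-way tie when exactly $k$ opponents also realize $1$; hence the payoff conditional on realizing $1$ is
\[
\sum_{k=0}^{n-1}\binom{n-1}{k}\frac{1}{k+1}a^{k}(1-a)^{n-1-k}.
\]
Applying the identity $\binom{n-1}{k}\frac{1}{k+1}=\frac{1}{n}\binom{n}{k+1}$, multiplying and dividing by $a$, and re-indexing $j=k+1$, this collapses to $\dfrac{1-(1-a)^{n}}{na}$ (read as its limit $1$ when $a=0$). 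This is precisely the term $c\,(1-(1-a)^{n})/(na)$ that appears, with $c$ the deviator's mass at $1$, in the proof of Theorem~\ref{n>}.

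Next I would let player $1$ deviate to the two-point distribution placing mass $\mu$ on $1$ and mass $1-\mu$ on $0$; this is admissible since its mean is $\mu$. Against the equilibrium profile of the others: realizing $0$ gives player $1$ a payoff of $0$, because the opponents' distributions are atomless on $[0,1)$, so $\Pr(\max_{j \neq 1}X_{j}=0)=0$ and the event that would be needed for a tie at $0$ has probability zero; realizing $1$ gives the conditional payoff computed above. Therefore the deviation yields
\[
\mu\cdot\frac{1-(1-a)^{n}}{na}+(1-\mu)\cdot 0=\frac{\mu\big(1-(1-a)^{n}\big)}{na}\,,
\]
and equilibrium forces this to be at most $1/n$, which rearranges to $a\ge\mu\big[1-(1-a)^{n}\big]$.

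The only real obstacle is getting the deviation payoff exactly right: one must use the previous lemma to eliminate the contribution of the atom at $0$, and one must handle the fair tie-breaking at $1$ via the binomial identity above (equivalently, recognize the partial binomial sum). Once those two points are in place the argument is just bookkeeping, and the constraint $\mu<1/n$ case is vacuous since then $a=0$ and the inequality reads $0\ge 0$.
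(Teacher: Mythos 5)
Your proposal is correct and follows essentially the same route as the paper: deviate to the two-point distribution with mass $\mu$ at $1$ and $1-\mu$ at $0$, compute the conditional win probability at $1$ via the binomial sum $\sum_{k=0}^{n-1}\binom{n-1}{k}\frac{1}{k+1}a^{k}(1-a)^{n-1-k}=\frac{1-(1-a)^{n}}{na}$, and impose that the resulting deviation payoff be at most $1/n$. Your identity $\binom{n-1}{k}\frac{1}{k+1}=\frac{1}{n}\binom{n}{k+1}$ is a cleaner way to collapse the sum than the paper's appeal to $\sum_{k}\frac{1}{k+1}\binom{n}{k}z^{k}$, and your explicit note that the realization at $0$ contributes nothing (by the no-atoms-on-$[0,1)$ lemma) tidies a step the paper leaves implicit, but the argument is substantively identical.
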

\begin{proof}

Let each player play strategy $S_{i} = S$ where they each put weight $a$ on $1$. Suppose that player $1$ deviates and plays strategy $\hat{S}_{1}$ consisting of random variable $Y$ distributed with value $1$ with probability $\mu$ and $0$ with probability $1-\mu$.

Then, player $1$'s payoff is

\[\tag{$2$}\label{6}u_{1}(\hat{S}_{1}, S_{-1}) = \mu \sum_{i=0}^{n-1}\binom{n-1}{i}\bigg(\frac{1}{n-i}\bigg)(1-a)^{i}a^{n-1-i}\]

Through judicious use of the binomial theorem\footnote{See Appendix \ref{a0}.}, we can write \ref{6} as 

\[\tag{$3$}\label{l}u_{1}(\hat{S}_{1}, S_{-1}) = \mu\frac{1 - (1-a)^{n}}{na}\]

This must be less than or equal to $1/n$:

\[\label{b3}\tag{$4$}\begin{split}
    \frac{1}{n} &\geq \mu\frac{1 - (1-a)^{n}}{na}\\
    a &\geq \mu\big[1 - (1-a)^{n}\big]\\
\end{split}\]

\end{proof}

There must also be a continuous portion of the distribution on some interval $[t,s]$ with $t \geq 0$, $s \leq 1$. Accordingly, our candidate equilibrium strategy, $\mathcal{F}_{i}$, is of the following form

\[\mathcal{F}_{i} = \begin{cases}\label{f}\tag{$\mathcal{F}_{i}$}
      0 & x \in [0,t)\\
      F_{i} & x \in [t,s) \\
      1-a & x = s
   \end{cases}
\]
with $0 \leq t < s \leq 1$ and $\Pr(X=1)=a$.\footnote{It is clear that the weight on $1$, $a$, cannot be $\mu$ in the symmetric equilibrium. To see this, note that such a value for $a$ would beget a distribution with binary support, which we already ruled out in Lemma \ref{discrete}.} 

We look for a symmetric equilibrium. Observe that distributions $F_{i}$ must be such that 

\[\int_{t}^{s}xf_{i}(x)dx = \mu - a\]

Fix $F_{j}$ for $j \neq i$ and define $H$ as $F_{j}^{n-1}$. Given this distribution, we have the necessary condition that $F_{i}$ maximizes
\[\frac{1 - (1-a)^{n}}{n} + \int_{t}^{s}f_{i}(x)H(x)dx \]

We use calculus of variations techniques and so we define the functional $J[f]$ as the Euler-Lagrange equation

\[J[f_{i}] = \int_{t}^{s}f_{i}(x)H(x)dx - \lambda_{0}\bigg[\int_{t}^{s}f_{i}(x)dx - (1-a)\bigg] - \lambda\bigg[\int_{t}^{s}xf_{i}(x)dx - \mu + a\bigg]\]

The first constraint ensures the distribution satisfies Kolmogorov's second axiom, and the second constraint guarantees that the expectation is $\mu$. %We consider any distribution $\phi$ with $f_{i}(0) = f_{i}(1) = \phi(0) = \phi(1)$ and calculate

%\[\begin{split}
%\left.\frac{\delta J}{\delta \phi}\right\vert_{f} &= \lim_{\epsilon \to 0} \frac{J[f + \epsilon \phi] - J[f]}{\epsilon}\\
%&= \lim_{\epsilon \to 0}\frac{1}{\epsilon}\bigg[\int_{0}^{1}\epsilon \phi H(x)dx  - \lambda_{0}\bigg(\int_{0}^{1}\epsilon \phi dx\bigg) - \lambda\bigg(\int_{0}^{1}\epsilon \phi x dx\bigg)\bigg]
%\end{split}\]

The functional derivative is then

\[\begin{split}
\left.\frac{\delta J(f(x))}{\delta f(x)}\right. &= H(x) - \lambda_{0} - \lambda x
\end{split}\]

This must equal $0$ at a maximum, so we have,
\[H(x) = \lambda_{0} + \lambda x\]

Then, by symmetry, $H(\cdot) = F_{i}^{n-1}(\cdot)$. Moreover, we have two initial conditions that allow us to obtain $t$ and $s$. Using the conditions $F_{i}(t) = 0$ and $F_{i}(s) = (1-a)$,  the equilibrium distribution, $F_{i}$, must be

\[F_{i}(x) = (1-a)\bigg(\frac{x-t}{s-t}\bigg)^{1/(n-1)}\]

with the corresponding pdf,

\[f_{i}(x) = \frac{1}{x-t}\bigg(\frac{1-a}{n-1}\bigg)\bigg(\frac{x-t}{s-t}\bigg)^{1/(n-1)}\]

Note that we also need $\int_{t}^{s}xf_{i}(x)dx = \mu - a$, which reduces to

\[\tag{$5$}\label{2}\begin{split}
a = \frac{n\mu - \big[s + (n-1)t\big]}{n - \big[s + (n-1)t\big]}
\end{split}\]

We now show that $t$ must be $0$:

\begin{lemma}\label{lem7}
The lower bound of the continuous portion of the distribution, $t$, must be $0$.
\end{lemma}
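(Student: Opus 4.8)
The plan is to argue by contradiction: assuming $t>0$, I exhibit a feasible deviation for player $1$ that strictly increases her payoff against opponents all playing $\mathcal{F}_i$. Two structural facts make the deviation work. First, by the lemmas above no player places an atom on any point of $[0,1)$, and by construction $\mathcal{F}_i$ puts no mass on $[0,t)$ and is continuous at $t$ with $F_i(t)=0$; hence $H(x)=F_i^{n-1}(x)=0$ for all $x\in[0,t]$. Second, against opponents playing $\mathcal{F}_i$, player $1$'s win probability is \emph{affine} in the masses she allocates to points: a unit of own mass at $x\in[0,1)$ contributes exactly $H(x)$ (ties of the opponents' maximum inside $(0,1)$ occur with probability zero), while a unit of own mass at $1$ contributes the tie-adjusted amount $V:=\sum_{i=0}^{n-1}\binom{n-1}{i}(1-a)^i a^{\,n-1-i}\tfrac{1}{n-i}$, which by the binomial identity used in the proof of Lemma~\ref{blah} equals $\tfrac{1-(1-a)^n}{na}>0$ (and equals $1$ when $a=0$; note $a\in[0,1)$). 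The upshot is that mass placed near $t$ is essentially worthless, since $H$ is small there, whereas mass at $1$ has strictly positive value $V$.

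Now the deviation. Fix a small $\rho>0$. Since $F_i$ is continuous and strictly increasing on $[t,s]$, there is a $\delta=\delta(\rho)$ with $\int_t^{t+\delta}f_i(x)\,dx=F_i(t+\delta)=\rho$, and $\delta\to 0$ as $\rho\to 0$. Let player $1$ remove from $\mathcal{F}_i$ the mass $\rho$ that lies on $[t,t+\delta]$ and set $m:=\int_t^{t+\delta}x f_i(x)\,dx$, so that $t\rho\le m\le(t+\delta)\rho<\rho$ once $\rho$ is small (here I use $t<s\le 1$, so $t<1$). Redistribute this mass by adding $m$ to the atom at $1$ and $\rho-m$ to the atom at $0$. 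Total mass is still $1$, and the mean is unchanged because $m\cdot 1+(\rho-m)\cdot 0=m=\int_t^{t+\delta}xf_i\,dx$; hence this is a legal strategy, call it $\mathcal{G}$.

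Finally, since $\mathcal{G}$ and $\mathcal{F}_i$ agree off $\{0\}\cup[t,t+\delta]\cup\{1\}$ and payoffs are affine in own mass,
\[
u_1(\mathcal{G},S_{-1})-u_1(\mathcal{F}_i,S_{-1})=mV+(\rho-m)H(0)-\int_t^{t+\delta}H(x)f_i(x)\,dx=mV-\int_t^{t+\delta}Hf_i\,dx,
\]
using $H(0)=0$. But $\int_t^{t+\delta}Hf_i\,dx\le H(t+\delta)\,\rho=(1-a)^{n-1}\tfrac{\delta}{s-t}\,\rho=o(\rho)$ as $\rho\to 0$, whereas $mV\ge t\rho V$; so for $\rho$ small the difference is at least $t\rho V-o(\rho)>0$, contradicting that $\mathcal{F}_i$ is a symmetric equilibrium. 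Therefore $t=0$. I expect the main obstacle to be bookkeeping rather than any deep idea: one must check that the deviation payoff genuinely decomposes as an affine function of the reallocated masses (this uses the absence of opponent atoms in $(0,1)$ and a correct treatment of the tie at $1$), and that the payoff lost by removing mass near $t$ is truly $o(\rho)$ — which holds precisely because $H(t)=0$ and $H$ is continuous, so $\sup_{[t,t+\delta]}H\to 0$. A slicker but more delicate alternative is the pointwise first-order (Lagrangian) condition for the best-response problem: the linear shape of $H$ on $[t,s]$ forces the multiplier $\lambda_0=-(1-a)^{n-1}t/(s-t)$, and optimality at $x=0$ requires $H(0)=0\le\lambda_0$, i.e. $t\le 0$; I would remark on this but rely on the explicit deviation, which avoids justifying the variational optimality condition over all competing distributions and the special status of the atom at $x=1$.
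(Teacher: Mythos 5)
Your proposal is correct, and the driving observation is the same as the paper's—mass near $t$ is essentially worthless to a deviator because $H\equiv 0$ on $[0,t]$—but the deviation you construct and the bookkeeping you do are genuinely different. The paper's proof considers an arbitrary deviation $G$ supported on $[0,s]$ that keeps the atom at $1$ fixed at size $a$ but places positive density somewhere on $[0,t]$; it then computes the full deviation payoff, exploits the mean identity $\mu-a=(1-a)\bigl(s+(n-1)t\bigr)/n$ (which holds for the candidate $\mathcal{F}_i$ with $t>0$) to show the baseline terms collapse exactly to $1/n$, and reads off
\[
u_1(G,S_{-1})=\frac{1}{n}+\int_0^{t}(1-a)^{n-1}\frac{t-y}{s-t}\,g(y)\,dy>\frac{1}{n}.
\]
You instead build an explicit, mean-preserving perturbation of $\mathcal{F}_i$ itself—strip mass $\rho$ from $[t,t+\delta]$ and move weight $m$ to the atom at $1$ and $\rho-m$ to $0$—and bound the net payoff change directly as $\geq t\rho V - O(\rho\delta)>0$ for small $\rho$. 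Your route buys an entirely explicit feasible deviation (the paper leaves the existence of a valid $G$ with the stated mean implicit) and avoids the closed-form computation of $\int_t^s yf_i$, replacing it with a crude small-$\rho$ estimate; the paper's route yields a cleaner exact expression for the payoff gap valid for any such $G$. Both hinge on the two facts you isolate: affineness of the payoff in own mass (valid because opponents have no atoms in $(0,1)$ and the tie value at $1$ is linear in own mass there), and $H(t)=0$. One minor point worth stating explicitly if you write this up: since $F_i(t+\delta)=\rho$ gives $\delta=(s-t)(\rho/(1-a))^{n-1}$, the foregone term is actually $O(\rho^{n})$, sharper than the $o(\rho)$ you invoke and making the inequality transparent for all $n\geq 2$. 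No gaps.
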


\begin{proof}
We leave the detailed proof for Appendix \ref{a1}. Our proof is through contradiction; we suppose that there is a symmetric equilibrium in which $t > 0$, and show that there exists a profitable deviation.
\end{proof}

Finally, we pin down the size of the weight on $1$:

\begin{lemma}\label{lem8}
The weight on $1$, $a$, is given by $a = \mu - \mu(1-a)^{n}$.
\end{lemma}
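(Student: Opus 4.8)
The plan is to pin down $a$ by a first‑order (indifference) condition: in a symmetric equilibrium a player cannot gain by shifting an infinitesimal amount of probability, in a mean‑preserving way, between the interior of the continuous support and the atom at $1$. This forces the marginal winning probability at $1$ to coincide with the marginal winning probability at the top of $[0,s]$ (normalized by the location), and combining this with the mean constraint \eqref{2} — which, since $t=0$ by Lemma~\ref{lem7}, collapses to $s(1-a)=n(\mu-a)$ — yields the identity $a=\mu-\mu(1-a)^n$. First I would record the structure already in hand: by Lemmas~\ref{blah} and~\ref{lem7} and the Euler–Lagrange computation, any symmetric equilibrium has each player play $\mathcal F_i$ with $t=0$, i.e. $F_i(x)=(1-a)(x/s)^{1/(n-1)}$ on $[0,s]$ with an atom of size $a$ at $1$, and $s(1-a)=n(\mu-a)$. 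If $a=0$ the asserted identity reads $0=\mu-\mu$ and holds trivially (and this case arises only when $\mu=1/n$, since otherwise $s=n\mu>1$ is infeasible), so from here on assume $a>0$.

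Next I would compute the two relevant marginal winning probabilities against an opponent profile in which every $X_j$ is distributed as $\mathcal F_i$. The maximum $Z\coloneqq\max_{j\neq1}X_j$ has continuous c.d.f.\ $H(z)=(1-a)^{n-1}z/s$ on $[0,s]$, is flat at $(1-a)^{n-1}$ on $(s,1)$, and carries an atom of size $1-(1-a)^{n-1}$ at $1$. Hence an infinitesimal unit of probability placed at an interior point $x\in(0,s)$ beats everyone with probability $H(x)=(1-a)^{n-1}x/s$, while placed at $1$ it wins — after fair tie‑breaking against the $\mathrm{Binomial}(n-1,a)$ number of opponents also at $1$ — with probability $\sum_{k=0}^{n-1}\binom{n-1}{k}a^{k}(1-a)^{n-1-k}\tfrac{1}{k+1}$, which by the same binomial identity used to pass from \eqref{6} to \eqref{l} equals $\tfrac{1-(1-a)^{n}}{na}$.

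Then I would run the perturbation. Fix $0<x_{3}<x_{1}<s$; for small $\epsilon>0$, remove total mass $\epsilon+\epsilon'$ from the equilibrium density near $x_{1}$ (feasible, since $f_i>0$ there) and redeposit mass $\epsilon$ at $1$ and mass $\epsilon'\coloneqq\epsilon(1-x_{1})/(x_{1}-x_{3})$ near $x_{3}$; by the choice of $\epsilon'$ the mean is unchanged. Expanding the deviation payoff from the proof of Theorem~\ref{n>} to first order in $\epsilon$, the payoff changes by $\epsilon\bigl[\tfrac{1-(1-a)^{n}}{na}-\tfrac{(1-a)^{n-1}}{s}\bigr]+o(\epsilon)$, the terms involving $x_{1}$ and $x_{3}$ combining because $\tfrac{(1-a)^{n-1}x_{1}}{s}+\tfrac{(1-a)^{n-1}(1-x_{1})}{s}=\tfrac{(1-a)^{n-1}}{s}$. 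The reverse perturbation — move $\epsilon$ off the atom at $1$ onto $x_{1}$ and $\epsilon'$ off $x_{3}$ onto $x_{1}$, feasible precisely because $a>0$ — gives the negative of this. Equilibrium optimality forces both changes to be $\le0$, hence $\tfrac{1-(1-a)^{n}}{na}=\tfrac{(1-a)^{n-1}}{s}$; substituting $s=n(\mu-a)/(1-a)$ and clearing denominators gives $(\mu-a)\bigl(1-(1-a)^{n}\bigr)=a(1-a)^{n}$, i.e.\ $\mu\bigl(1-(1-a)^{n}\bigr)=a$, which is the claim.

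The main obstacle is making the perturbation step rigorous rather than heuristic: one must justify the first‑order expansion of the payoff functional and check the $o(\epsilon)$ remainder is genuinely negligible, which is routine here since all densities involved are bounded on the compact set $[x_{3},s]$ and $H$ is Lipschitz there. A secondary wrinkle is the borderline configuration $s=1$, where the continuous support abuts the atom with no gap; but the perturbation above only ever uses points in $[0,s]\cup\{1\}$ and so is valid whether or not $s<1$, and in any case $s=1$ forces $\mu=1/n$, where $a=0$ and the identity is immediate. Finally, to confirm the lemma actually determines $a$, I would note that for $\mu>1/n$ the function $a\mapsto\mu\bigl(1-(1-a)^{n}\bigr)-a$ is concave on $[0,\mu]$, vanishes at $0$ with positive derivative $n\mu-1$, and is negative at $a=\mu$, so it has a unique root in $(0,\mu)$; together with the trivial case $\mu=1/n$ this gives $a$ uniquely.
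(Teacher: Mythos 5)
Your proof is correct, and it takes a genuinely different route from the paper's. The paper establishes the two inequalities $a \geq \mu\bigl(1-(1-a)^n\bigr)$ and $a \leq \mu\bigl(1-(1-a)^n\bigr)$ separately via \emph{global} deviations to simple two-point distributions: the lower bound from the deviation to a $\{0,1\}$ distribution (Lemma \ref{blah}), and the upper bound from a deviation to a two-point distribution on $\{0,s\}$ when $\mu\leq s$ or on $\{s,1\}$ when $\mu>s$, with the algebra of the latter case closed by a contradiction. You instead derive the \emph{equality} directly from a local indifference condition: a mean-preserving infinitesimal transfer of mass among the atom at $1$ and two interior points $x_3<x_1<s$ of the continuous support must leave the payoff stationary at equilibrium (and since both directions of the transfer are feasible when $a>0$, stationarity forces an exact equality rather than a one-sided inequality), yielding $\tfrac{1-(1-a)^n}{na}=\tfrac{(1-a)^{n-1}}{s}$, which together with the mean constraint $s(1-a)=n(\mu-a)$ gives the claim. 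Your computation of both marginal winning probabilities and the mean-preserving weight $\epsilon'=\epsilon(1-x_1)/(x_1-x_3)$ checks out, and the degenerate case $a=0$ (which forces $\mu=1/n$ because $s=n\mu\leq 1$) is handled correctly.

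In effect, you are extending the paper's own calculus-of-variations derivation — which the paper applies only to the continuous portion $F_i$ to obtain $H(x)=\lambda_0+\lambda x$ — to the atom at $1$, so that the size of the atom also falls out of a first-order condition rather than requiring separate ad hoc deviations. This is more systematic and avoids the two-case split on $\mu\lessgtr s$; the price is that the first-order expansion of the payoff functional and the $o(\epsilon)$ control must be made rigorous, whereas the paper's two-point deviations avoid any limiting argument entirely. Your final uniqueness remark (concavity of $a\mapsto\mu(1-(1-a)^n)-a$, sign change on $(0,\mu)$) is a useful supplement that the paper defers to the proof of Theorem \ref{lemlem}.
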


\begin{proof}
The detailed proof may be found in Appendix \ref{a2}.
\end{proof}
\begin{corollary}\label{core}
If $\mu = 1/n$, then the unique symmetric Nash equilibrium is for each player to play strategy $F_{i} \coloneqq \big(\frac{x}{n\mu}\big)^{1/(n-1)}$ supported on $[0,n\mu]$.
\end{corollary}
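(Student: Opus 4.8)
The plan is to specialize Theorem \ref{n>} to the boundary case $\mu = 1/n$ and to show that the point mass $a$ at $1$ vanishes there, so the equilibrium of Theorem \ref{n>} collapses to the claimed atomless distribution.

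First I would simply invoke Theorem \ref{n>}: since $\mu = 1/n$ satisfies the hypothesis $\mu \geq 1/n$, the unique symmetric Nash equilibrium is for each player to play the strategy $\mathcal{F}_{i}$ given by $F_{i}(x) = (1-a)\bigl(x/s\bigr)^{1/(n-1)}$ on $[0,s]$ together with $\Pr(X=1) = a$, where $a = \mu - \mu(1-a)^{n}$ and $s = n\mu(1-a)^{n-1}$. Uniqueness is therefore inherited directly and needs no separate argument.

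The one substantive step is to pin down $a$ when $\mu = 1/n$. Substituting $\mu = 1/n$ into the fixed-point equation $a = \mu - \mu(1-a)^{n}$ yields $na = 1 - (1-a)^{n}$, i.e.\ $g(a) := (1-a)^{n} - 1 + na = 0$. I would then note that $g(0) = 0$ and
\[
g'(a) = -n(1-a)^{n-1} + n = n\bigl(1 - (1-a)^{n-1}\bigr) > 0 \quad \text{for all } a \in (0,1]
\]
(using $n \geq 2$), so $g$ is strictly increasing on $[0,1]$ and hence $a = 0$ is its unique root in that interval. Thus the equilibrium has no atom at $1$. This monotonicity check is the only place any real work is done; I expect it to be entirely routine, and there is no genuine obstacle.

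Finally, plugging $a = 0$ back in gives $s = n\mu(1-0)^{n-1} = n\mu$ and $F_{i}(x) = (x/(n\mu))^{1/(n-1)}$ for $x \in [0, n\mu]$, which is precisely the strategy in the statement; and of course $n\mu = 1$ here, so the support is all of $[0,1]$ and $F_i(x) = x^{1/(n-1)}$. This completes the argument.
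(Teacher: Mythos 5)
Your argument is correct and follows essentially the same route as the paper: specialize Theorem \ref{n>} at $\mu = 1/n$, show that $a = 0$ is the unique solution of the fixed-point equation via a monotonicity argument, and substitute back to recover $s = n\mu$ and $F_i(x) = (x/(n\mu))^{1/(n-1)}$. The only (cosmetic) difference is that you establish monotonicity by explicitly differentiating $g(a) = (1-a)^n - 1 + na$ in $a$, whereas the paper changes variables to $b = 1-a$ and asserts monotonicity of $\varphi(b) = (1-b)/(1-b^n) - \mu$ without computing the derivative; your version is a bit more self-contained.
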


\begin{proof}
We need to show that $a = 0$ and $s = 1$. Recall that we have $a = \mu - \mu(1-a)^n$, which becomes $0 = (1-a)^{n} + na - 1$ when $\mu = 1/n$. It is easy to see that this polynomial has a root at $a = 0$. For convenience, define $b = 1-a$, and after rearranging we obtain %\[(1-b) - \mu + \mu b^{n} = 0\] Or,

\[\tag{$6$}\label{4}\begin{split}
    \mu &= \frac{1-b}{1-b^{n}}\\
\end{split}\]

or, for $\mu = 1/n$,

\[\begin{split}
    \frac{1}{n} &= \frac{1-b}{1-b^{n}}\\
\end{split}\]

Define $\varphi \coloneqq (1-b)/(1-b^{n}) - \mu$. Clearly, $\varphi$ is decreasing in $b$ for $b \in [0,1]$, and is therefore increasing in $a$ over the same domain. As a result, $a$ must be $0$. We can substitute this into $s = (1-a)^{n-1}$ and obtain that $s = 1$.
\end{proof}

We write the following result, which describes the effect of an increase in the number of players.

\begin{theorem}\label{lemlem}
Fix $\mu>1/n$. Then, if the number of players is increased, the weight placed on $1$ in the symmetric equilibrium must increase. That is, $a$ is strictly increasing in $n$. Moreover, $s$ is strictly decreasing in $n$.

In the limit, as the number of players, $n$, becomes infinitely large, the weight on $1$ converges to $\mu$. That is, the equilibrium distribution converges to a distribution with support on two points, $1$ and $0$.
\end{theorem}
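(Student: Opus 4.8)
Everything rides on the explicit description of the equilibrium obtained above: the atom $a=a_n$ at $1$ is the solution of $a=\mu-\mu(1-a)^n$, and $s=s_n=n\mu(1-a_n)^{n-1}$. Writing $b_n:=1-a_n\in(0,1)$, the first relation reads $\mu=\frac{1-b_n}{1-b_n^{\,n}}$, and, since it gives $\mu b_n^{\,n}=b_n-(1-\mu)$, the second becomes $s_n=n\bigl(1-\tfrac{1-\mu}{b_n}\bigr)$. I would compare consecutive integers $n$ (equivalently, differentiate in $n$ treated as a real parameter). For the monotonicity of $a$, put $\varphi_n(b)=\frac{1-b}{1-b^{\,n}}-\mu$ on $(0,1)$: as in the proof of Corollary~\ref{core}, $\varphi_n$ is strictly decreasing, with $\varphi_n(0^+)=1-\mu>0$ and $\varphi_n(1^-)=\tfrac1n-\mu<0$ (here $\mu>1/n$ is used), so $b_n$ is its unique zero. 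For fixed $b\in(0,1)$, $b^{\,n+1}<b^{\,n}$ yields $\varphi_{n+1}(b)-\varphi_n(b)=(1-b)\bigl(\tfrac1{1-b^{\,n+1}}-\tfrac1{1-b^{\,n}}\bigr)<0$; in particular $\varphi_{n+1}(b_n)<\varphi_n(b_n)=0=\varphi_{n+1}(b_{n+1})$, and strict monotonicity of $\varphi_{n+1}$ forces $b_{n+1}<b_n$, i.e.\ $a_{n+1}>a_n$.

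For the limit, let $n_0$ be the smallest integer with $\mu>1/n_0$. The previous step gives $b_n\le b_{n_0}<1$ for all $n\ge n_0$, hence $0<b_n^{\,n}\le b_{n_0}^{\,n}\to0$, so $a_n=\mu\bigl(1-b_n^{\,n}\bigr)\to\mu$ and $b_n\downarrow1-\mu$. The same estimate gives $s_n=n\mu b_n^{\,n-1}\le n\mu b_{n_0}^{\,n-1}\to0$. Consequently, for every fixed $x\in(0,1)$ one has $x>s_n$ for all large $n$, so $\mathcal{F}_i$ satisfies $F_i(x)=1-a_n\to1-\mu$; since $\mathcal{F}_i$ places mass $a_n\to\mu$ at $1$, the equilibrium cdf converges at every continuity point to the cdf that puts mass $1-\mu$ at $0$ and $\mu$ at $1$, which is the asserted two-point limit.

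The one step that is not a quick comparison is the monotonicity of $s$, because $s_n=n\mu b_n^{\,n-1}$ pits the increasing factor $n$ against the decreasing factor $b_n^{\,n-1}$, and one must show the latter wins. Differentiating $\mu=\frac{1-b}{1-b^{\,n}}$ in $n$ gives $b'(n)=\dfrac{(1-b)\,b^{\,n}\ln b}{g_n(b)}$ with $g_n(b):=1+(n-1)b^{\,n}-nb^{\,n-1}$; since $g_n(1)=0$ and $g_n'(b)=n(n-1)b^{\,n-2}(b-1)<0$ on $(0,1)$, we have $g_n(b)>0$, hence $b'(n)<0$ (re-deriving the previous monotonicity). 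Substituting this into $\mathrm{d}s/\mathrm{d}n$ computed from $s=n\mu b^{\,n-1}$ — the factor $\mu$ cancels — the inequality $s'(n)<0$ reduces to the one-variable statement
\[
n\,\bigl(1-b^{\,n-1}\bigr)\ln\frac1b>1+(n-1)b^{\,n}-nb^{\,n-1},\qquad b\in(0,1),\ n>1 .
\]
As the right-hand side equals $\bigl(1-b^{\,n-1}\bigr)-(n-1)b^{\,n-1}(1-b)\le1-b^{\,n-1}$, this is immediate whenever $n\ln(1/b)\ge1$; on the remaining range $b\in(e^{-1/n},1)$, writing $b=e^{-\ell}$ one checks that the difference of the two sides vanishes together with its first $\ell$-derivative at $\ell=0$ while its leading term is $\tfrac{n(n-1)}{2}\ell^{2}>0$, and an elementary sign analysis extends positivity to the whole interval. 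This near-degenerate estimate is the main obstacle; the rest is bookkeeping with the two defining relations.
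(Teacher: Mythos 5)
Your arguments for the monotonicity of $a$ and for the limit $a_n\to\mu$ are, in substance, the paper's: both rest on the strict monotonicity of $\varphi_n(b)=(1-b)/(1-b^n)-\mu$ in $b$ and the fact that increasing $n$ pushes $\varphi_n$ down pointwise (you compare $\varphi_{n+1}$ to $\varphi_n$ at $b_n$; the paper computes $\partial\varphi/\partial n<0$ — same idea). Your limit argument is actually tighter than the paper's, which simply asserts $a\to\mu$, whereas you note $b_n\le b_{n_0}<1$ so $b_n^n\to0$.

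Where you genuinely diverge from the paper is on $s$ strictly decreasing, and rightly so: the paper computes only $\partial s/\partial a=-n(1-\mu)/(1-a)^2<0$ from $s=n(\mu-a)/(1-a)$, which does not settle the matter because $s$ also depends on $n$ directly, with $\partial s/\partial n=(\mu-a)/(1-a)>0$ pulling the other way. You correctly identify that the competition between the explicit factor $n$ and the shrinking $b_n^{\,n-1}$ must be resolved, and you reduce the claim to the one-variable inequality $n(1-b^{n-1})\ln(1/b)>g_n(b)$ with $g_n(b)=1+(n-1)b^n-nb^{n-1}$. Your final step, however — passing to $b=e^{-\ell}$ and invoking a second-order Taylor expansion plus ``an elementary sign analysis'' on $(e^{-1/n},1)$ — is left unproven; the Taylor argument only gives positivity near $\ell=0$. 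The case split is in fact unnecessary: since $\ln(1/b)>1-b$ for $b\in(0,1)$, it suffices to show $n(1-b)(1-b^{n-1})\ge g_n(b)$, which after dividing by $1-b>0$ becomes $n-b^{n-1}\ge 1+b+\cdots+b^{n-2}$; the right side is at most $n-1$ while the left side exceeds $n-1$, giving strict inequality. With that substitution your proof is complete and, on the $s$-monotonicity, fills a real gap in the paper's own argument.
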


\begin{proof}
Define $b = 1-a$ and $\varphi$ as above (the right hand side of \ref{4}). Recall that for $b \in [0,1]$, $\varphi$ is decreasing in $b$ and therefore increasing in $a$ over the same interval. Moreover, we make take the partial derivative with respect to $n$:

\[\frac{\partial{\varphi}}{\partial{n}} = \frac{(1-b)b^{n}\ln(b)}{(b^{n}-1)^{2}} < 0\]

Thus, as $n$ increases, the $a$ needed to satisfy the above expression must increase.  That is, more and more weight is put on $1$. Concurrently, $s$, or the upper bound of the continuous portion of the distribution is shrinking, since, recall \[s = \frac{n(\mu-a)}{1-a}\]

and thus \[\frac{\partial{s}}{\partial{a}} = \frac{-n(1-\mu)}{(1-a)^{2}}\]Furthermore, as $n$ goes to infinity, we see that $a$ goes to $\mu$.

\end{proof}

An illustration of the relationship between $n$ and the symmetric equilibrium values of $a$ and $s$ is given in Figure \ref{figgypudding}.

\begin{figure}
\begin{center}
\includegraphics{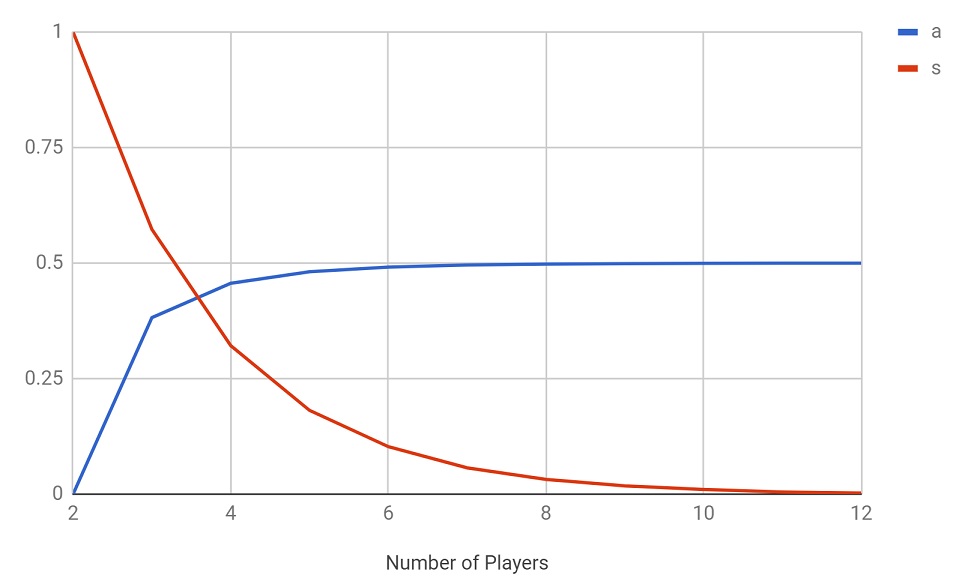}
\end{center}
\caption{\label{figgypudding} Effect of $n$ on $s$ and $a$ at Equilibrium for $\mu = 1/2$}
\end{figure}

\subsection{\texorpdfstring{$\mu < \frac{1}{n}$}{text}}\label{<}

We write, simply

\begin{theorem}\label{n<}
If $\mu < 1/n$ then the unique symmetric Nash equilibrium is for each player to play strategy $F_{i} \coloneqq \big(\frac{x}{n\mu}\big)^{1/(n-1)}$ supported on $[0,n\mu]$.
\end{theorem}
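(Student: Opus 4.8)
The plan is to follow the architecture of the proof of Theorem \ref{n>}, which becomes markedly simpler here: by the second lemma of Section 2 (the one establishing atomlessness when $\mu<1/n$) every symmetric equilibrium is atomless, so there is no point mass on $1$ and the parameter ``$a$'' is identically zero. For existence, suppose every player $j\neq1$ plays $F_i$. Then $Z:=\max_{j\neq1}X_j$ has c.d.f.\ $F_i^{\,n-1}(z)=z/(n\mu)$ on $[0,n\mu]$, i.e.\ $Z$ is uniform on $[0,n\mu]$ and in particular atomless; hence all tie terms in $u_1$ vanish whatever player $1$ does, and (recalling mixed strategies are pure strategies) player $1$'s payoff against any distribution $G$ with mean $\mu$ is $u_1=\mathbb{E}_G[\phi(X_1)]$ with $\phi(y):=\min\{y/(n\mu),1\}=\Pr(Z<y)$. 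Since $\phi$ is concave on $[0,1]$, Jensen gives $u_1\le\phi(\mu)=1/n$; and for $G=F_i$, whose support lies in $[0,n\mu]$ where $\phi$ is linear, $u_1=\mathbb{E}_{F_i}[X_1]/(n\mu)=1/n$ exactly. So no deviation is profitable and, the game being symmetric and constant sum, this symmetric profile is a Nash equilibrium of value $1/n$.

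For uniqueness, take any symmetric equilibrium; by Lemma \ref{discrete} and the atomlessness lemma it has a continuous common c.d.f.\ $F$ with support an interval $[t,s]$, $0\le t<s\le1$ --- the support has no gap $(c,d)$, since then $H:=F^{n-1}$ would be constant on $[c,d]$, forcing $\lambda=0$ in the optimality condition $H(x)=\lambda_0+\lambda x$, incompatible with $H(t)=0<1=H(s)$. Running the calculus-of-variations argument of Section \ref{=} verbatim with $a=0$: a best response maximizes $\int_t^s f(x)H(x)\,dx$ subject to $\int_t^s f=1$ and $\int_t^s xf=\mu$, so the functional derivative $H(x)-\lambda_0-\lambda x$ vanishes on the support; by symmetry $H=F^{n-1}$, and the conditions $F(t)=0$, $F(s)=1$ give $F(x)=\big((x-t)/(s-t)\big)^{1/(n-1)}$. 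Imposing $\int_t^s xf(x)\,dx=\mu$ (Equation \ref{2} with $a=0$) yields $s+(n-1)t=n\mu$, so it remains only to show $t=0$.

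For that last step one may quote the analogue of Lemma \ref{lem7}; alternatively, here is a direct argument. Against the others playing the $F$ just derived, $Z$ is uniform on $[t,s]$, so the realization $s$ beats all opponents with probability $1$ and the realization $0$ beats them with probability $H(0)=0$. Thus player $1$ may deviate to the two-point distribution placing mass $\mu/s$ on $s$ and $1-\mu/s$ on $0$ (legitimate, since $t<\mu$ forces $s=n\mu-(n-1)t>\mu$, whence $\mu/s\in(0,1)$), which yields payoff exactly $\mu/s$. If $t>0$ then $s=n\mu-(n-1)t<n\mu$, so $\mu/s>1/n$: a profitable deviation, contradicting equilibrium (whose value is $1/n$ by symmetry). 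Hence $t=0$, $s=n\mu$, and the symmetric equilibrium is unique and equals $F_i$. The one genuinely delicate point is this exclusion of $t>0$; existence is immediate from Jensen, and recovering the functional form is routine once atomlessness and connectedness of the support are secured.
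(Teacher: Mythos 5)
Your proof is correct, and it differs from the paper's in two genuine respects. For existence, the paper fixes $Z=\max_{j\neq 1}X_j$, observes it is uniform on $[0,n\mu]$, writes out $u_1(\hat S_1,S_{-1})$, and derives a contradiction by routine inequality manipulation from the supposition $u_1>1/n$; you instead recognize $u_1=\mathbb E_G[\phi(X_1)]$ with $\phi(y)=\min\{y/(n\mu),1\}$ concave and invoke Jensen, which is cleaner and makes the source of the bound (the kink at $n\mu$) transparent. For uniqueness, the paper only remarks that it is ``immediate, following similar arguments to those used in proving Theorem \ref{n>}''; tracing those arguments, the paper's route to $t=0$ is Lemma \ref{lem7}, whose deviation moves some density into $[0,t]$ and shows the payoff is $\tfrac1n+\int_0^t(1-a)^{n-1}\tfrac{t-y}{s-t}g(y)\,dy>\tfrac1n$. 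You instead use the two-point deviation $\mu/s$ on $s$, $1-\mu/s$ on $0$ (the same test deviation the paper uses in the first case of Lemma \ref{lem8}) and note $\mu/s>1/n$ whenever $s<n\mu$, which together with $s+(n-1)t=n\mu$ forces $t=0$; this is shorter and avoids the integral identity. You also supply the support-connectedness argument via $\lambda=0$ that the paper omits. All steps check out --- in particular the tie terms do vanish because $Z$ is atomless, $t<\mu$ does force $\mu/s\in(0,1)$, and the Euler--Lagrange computation with $a=0$ yields exactly $F(x)=((x-t)/(s-t))^{1/(n-1)}$. Overall your version is more self-contained than the paper's and slightly more elegant on existence.
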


\begin{proof}
From Corollary \ref{core}, it is clear that this distribution is the unique symmetric Nash equilibrium for distributions restricted to $[0,n\mu]$. However, here we wish to show that this distribution is the unique symmetric Nash equilibrium even for deviations on the whole of $[0,1]$.

Consider the case where all $n$ players play strategy $S_{i}$, in which $X_{i}$ has distribution $F_{i}(x) = \big(\frac{x}{n\mu}\big)^{1/(n-1)}$. Suppose a player, say player $1$, deviates, and plays any other strategy $\hat{S}_{1} = G(y)$ supported on $[0,1]$ with mean $\mu$. We wish to show that the probability that $\max_{i \neq 1}X_{i} < Y$ is less than or equal to $1/n$.

For convenience define the new random variable $Z \coloneqq \max_{i \neq 1}X_{i}$. Evidently, $Z$ has distribution $H(z) = \frac{z}{n\mu}$, with associated density $h(z) = \frac{1}{n\mu}$.

Suppose for the sake of contradiction that this deviation is profitable for player $1$: $u_{1}(\hat{S}_{1},S_{-1}) > 1/n$. Then, 

\[\begin{split}
u_{1}(\hat{S}_{1},S_{-1}) &= \big(1 - G(n\mu)\big) + \int_{0}^{n\mu}\int_{0}^{y}g(y)h(z)dzdy\\
&= \big(1 - G(n\mu)\big) + \int_{0}^{n\mu}\frac{y}{n\mu}g(y)dy\\
\end{split}\]
Thus, our supposition above holds if and only if

\[\begin{split}
\big(1 - G(n\mu)\big) + \int_{0}^{n\mu}\frac{y}{n\mu}g(y)dy &> \frac{1}{n}\\
\frac{1}{n} - \int_{n\mu}^{1}\frac{y}{n\mu}g(y)dy &> G(n\mu) - \frac{n-1}{n}\\
\end{split}\]
Or,

\[\begin{split}
1 &> G(n\mu) + \int_{n\mu}^{1}\frac{y}{n\mu}g(y)dy \geq G(n\mu) + \int_{n\mu}^{1}g(y)dy = 1\\
\end{split}\]
We have established a contradiction and thus we conclude that there is no profitable deviation. Uniqueness is immediate, following similar arguments to those used in proving Theorem \ref{n>}.
\end{proof}

\subsection{Any Positive Support}

We finish by solving for the unique equilibrium when players may choose any positive support for the random variable $X_{i}$. We write the following theorem.

\begin{theorem}Fix $\mu > 0$. The unique symmetric Nash equilibrium is for each player to play strategy $F_{i} \coloneqq \big(\frac{x}{n\mu}\big)^{1/(n-1)}$ supported on $[0,n\mu]$.
\end{theorem}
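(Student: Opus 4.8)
The plan is to mimic the two-part structure of Theorem~\ref{n<}: first verify that the proposed profile is a symmetric equilibrium on the enlarged strategy space, then argue it is the only one. For the verification, suppose every player $j\neq 1$ plays $F_j(x)=(x/(n\mu))^{1/(n-1)}$ on $[0,n\mu]$, so that $Z\coloneqq\max_{j\neq 1}X_j$ has distribution $H(z)=z/(n\mu)$ on $[0,n\mu]$ and $H(z)=1$ for $z\geq n\mu$ (the uniform distribution on $[0,n\mu]$), and note a one-line computation gives $\mathbb{E}_{F_j}[X_j]=\mu$. Let player $1$ deviate to an arbitrary $G$ supported in $[0,+\infty)$ with mean $\mu$. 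Ignoring the measure-zero ties, the payoff is $u_1=\int_0^{n\mu}\frac{y}{n\mu}\,dG(y)+\big(1-G(n\mu)\big)$, and since $\frac1n=\frac{1}{n\mu}\mathbb{E}_G[Y]=\int_0^{\infty}\frac{y}{n\mu}\,dG(y)$ we obtain $u_1-\frac1n=\int_{n\mu}^{\infty}\big(1-\tfrac{y}{n\mu}\big)\,dG(y)\leq 0$, because the integrand is nonpositive on $[n\mu,\infty)$. This is exactly the chain of inequalities closing the proof of Theorem~\ref{n<}, with the upper limit $1$ replaced by $+\infty$, so no deviation is profitable.

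For uniqueness I would check that each earlier step survives the enlargement of the ambient space from $[0,1]$ to $[0,+\infty)$. The deviation in Lemma~\ref{discrete} only shaves weight off the top atom and redistributes it to lower points together with a small atom near $0$, all still admissible; so finitely supported symmetric equilibria are ruled out. The argument excluding atoms in $[0,1)$ now excludes an atom at \emph{any} finite point $b$: move a sliver of that atom to $b+\eta$ and a compensating sliver to $0$; crucially, there is no right endpoint of the ambient space, so there is no analogue of the tolerated ``atom at $1$'' of the bounded problem. Hence every symmetric equilibrium strategy is atomless, and the Euler--Lagrange step applies verbatim: on the support $H(x)=F^{n-1}(x)=\lambda_0+\lambda x$, which forces $\lambda>0$ and therefore a bounded interval of support $[t,s]$ with $F(t)=0$, $F(s)=1$, i.e.\ $F(x)=\big(\tfrac{x-t}{s-t}\big)^{1/(n-1)}$; a linear $F^{n-1}$ cannot remain below $1$ on an unbounded interval, and the complementary-slackness comparison of $H(x)$ with $\lambda_0+\lambda x$ across a putative gap forbids disconnected support, so unbounded or disconnected supports are excluded. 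An argument analogous to Lemma~\ref{lem7} then gives $t=0$, and the mean constraint $\int_0^s x f(x)\,dx=\mu$ evaluates to $s/n=\mu$, i.e.\ $s=n\mu$, yielding exactly $F_i=(x/(n\mu))^{1/(n-1)}$.

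I expect the main obstacle to be the uniqueness half, specifically making the ``support is a single bounded interval'' claim fully rigorous before invoking the $t=0$ argument: dropping the $[0,1]$ cap reopens the possibilities of heavy-tailed or gappy supports, and these must be closed off by the Euler--Lagrange condition and the atom-elimination lemmas as above. Once that is in place, everything collapses to the $\mu<1/n$ analysis of Theorem~\ref{n<} and Corollary~\ref{core}, the only difference being that the resulting value $s=n\mu$ may now exceed $1$ with no contradiction. It is worth remarking explicitly why the answer differs from Theorem~\ref{n>} when $\mu\geq 1/n$: the point mass at $1$ there was an artifact of the boundary constraint, so removing the constraint removes the atom and replaces the truncated linear piece by the full root distribution on $[0,n\mu]$.
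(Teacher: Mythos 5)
Your proof is correct and is precisely the ``analogous to Theorem~\ref{n<}'' argument the paper omits: the verification reduces to $u_1 - \tfrac1n = \int_{n\mu}^{\infty}\bigl(1 - y/(n\mu)\bigr)\,dG(y) \le 0$, which is a clean repackaging of the paper's chain of inequalities, and your uniqueness sketch correctly reuses the atom-elimination lemmas and the Euler--Lagrange step. You also identify the one substantive change the enlargement entails---there is no upper boundary, hence no tolerated atom analogous to the mass at $1$, and the linear $F^{n-1}$ forces bounded support---which is exactly where the earlier proofs need (minor) modification.
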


The proof is analogous to that for Theorem \ref{n<} and so is omitted. This problem, of course, is sensible should our interpretation be one of sellers choosing a distribution of qualities for a product. If on the other hand, our problem is one of competitive persuasion (with the Bayes-plausibility constraint relaxed), this environment (allowing for the support chosen to be any subset of $\Re_{+}$) is not appropriate.

\subsection{Arbitrary Ranking}

We can further generalize the model by characterizing the unique symmetric equilibrium for the situation where each player's objective is to have at least the $k$th highest realization (with ties settled randomly). This would correspond to the scenario with $k \geq 1$ buyers.

\section{Brief Discussion}

To put succinctly, in the game examined in this paper, the uniform distribution (or more generally, the distribution $F(x) = x^{1/(n-1)}$) is supreme. What is important is that the distribution of the random variable $\max_{j \neq i}X_{j}$; the distribution each player $i$ faces, is uniform. The intuition behind this result is simple; the uniform does not allow for one's opponent to achieve a payoff higher than $1/n$. If the exogenously given mean is too high; however, any continuous distribution on the interval falls vulnerable to deviation by putting a point mass at $1$ and so to counter this at equilibrium players must put a point mass on $1$. 

As the number of players grows, the cutoff beyond which a point mass is necessary shrinks and the size of the point mass on $1$ grows. Moreover, as the number of players becomes infinitely large, each player will have to put all of the weight $\mu$ on the point $1$. If however, the players can choose any positive support for their distribution then they can continue to enlarge the interval of support as $n$ grows and they never have to include a point mass.%For whatever other vector of strategies we choose, there is a profitable deviation.

Thus, we can see that there is interesting intuition that can be gleaned from this problem. If the mean is relatively small ($\mu < 1/n$), then the players do not use the top portion of their intervals at all, since that portion of the distribution is too ``valuable", so-to-speak, and is better spread out over the lower portion of the interval. If we think of the problem as one of a posterior distribution over prizes induced by signals, this means that there is no fully informative signal for the highest realization of the prize. This runs counter to the seminal result from \cite{kam}, where the highest state always induces the ``high" signal. 

Then, as the number of players grows, players will use more and more of the top portion of the intervals. Furthermore, if the number of players is sufficiently high, or equivalently, if the mean becomes sufficiently large, then the players must put a point mass at $1$. As the number of players grows beyond the cutoff, each player will increase the weight on $1$ and the support of the continuous portion of the distribution will shrink. Finally, in the limit, equilibrium will consist of two point masses. Looking again at the problem as the choice of posteriors, we see that the players become increasingly ``honest" about the high realization. Consonantly, as $n$ grows (in the persuasion setting with a binary prior) we get full revelation in the limit--competition forces the players to reveal everything. %Of course, if there are no constraints on the maximal support of the distribution, each player can always just play the continuous distribution supported on the apposite interval, and they never put a point mass on $1$.

From the point of view of the consumer; then, an increase in the number of players is a good thing. They would ideally draw from distributions with the maximum variance, which is the distribution supported on the end points of the interval. Thus, as the number of players becomes infinitely large, the equilibrium converges to one that is consumer-optimal.

Overall, this paper can seen as an analysis of a competitive persuasion problem for the situation in which the agents have a common binary prior, or for the general competitive persuasion problem where the Bayes-plausibility constraint is relaxed. Moreover, our results apply to a variety of problems beyond information design. Our setup models the unconstrained version of competition between agents who each must mix or choose a mixture of some type or quality, which mixtures are each then randomly sampled from and the highest chosen. 

\bibliography{sample.bib}

\appendix

\section{Appendix}\label{appendix1}

\subsection{Equation \texorpdfstring{\ref{101}}{TEXT} Derivation}\label{a01}

\begin{claim}
\[\frac{c}{n\mu} + (1-a)^{n-1}\big[K - G(s)\big] + \int_{0}^{s}\frac{(1-a)^{n-1}}{s}yg(y)dy > \frac{1}{n}\]
is equivalent to
\[\begin{split}
    K &> \int_{s}^{1}\frac{1}{s}yg(y)dy + G(s)\\
\end{split}\]
\end{claim}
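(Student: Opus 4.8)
The plan is to reduce the first inequality to the second purely by substitution and cancellation, using two facts established earlier: the defining identity $s = n\mu(1-a)^{n-1}$ from Theorem \ref{n>}, and the mean constraint on the deviation strategy $\mathcal{G}$.

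First I would exploit $s = n\mu(1-a)^{n-1}$ to rewrite the coefficient in the integral term, namely $\frac{(1-a)^{n-1}}{s} = \frac{1}{n\mu}$, so that the last term on the left of the first inequality is $\frac{1}{n\mu}\int_{0}^{s} y\,g(y)\,dy$. Next I would use the fact that $\mathcal{G}$ has mean $\mu$ and places mass $c$ on $1$ (with continuous part $G$ on $[0,1)$), hence $c + \int_{0}^{1} y\,g(y)\,dy = \mu$, i.e. $\int_{0}^{1} y\,g(y)\,dy = \mu - c$. Splitting this at $s$ gives $\int_{0}^{s} y\,g(y)\,dy = \mu - c - \int_{s}^{1} y\,g(y)\,dy$, so the left-hand side of the first inequality becomes
\[
\frac{c}{n\mu} + (1-a)^{n-1}\big[K - G(s)\big] + \frac{1}{n\mu}\left(\mu - c - \int_{s}^{1} y\,g(y)\,dy\right).
\]

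Now the terms $\frac{c}{n\mu}$ and $\frac{\mu - c}{n\mu}$ combine to $\frac{1}{n}$, which cancels against the $\frac{1}{n}$ on the right of the first inequality. What survives is
\[
(1-a)^{n-1}\big[K - G(s)\big] > \frac{1}{n\mu}\int_{s}^{1} y\,g(y)\,dy.
\]
Dividing both sides by $(1-a)^{n-1} > 0$ and noting $\frac{1}{n\mu(1-a)^{n-1}} = \frac{1}{s}$, this is exactly $K > G(s) + \int_{s}^{1}\frac{1}{s}\,y\,g(y)\,dy$. Since every manipulation is an equivalence (the only division is by the strictly positive $(1-a)^{n-1}$, and $\mu > 0$), the two displayed inequalities are equivalent.

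The calculation is routine; the only point requiring care — and the main thing to get right — is the bookkeeping around the mean constraint: remembering that $K = 1-c$ and that the mean decomposes as $c\cdot 1 + \int_{0}^{1} y\,g(y)\,dy = \mu$ rather than $\int_{0}^{1} y\,g(y)\,dy = \mu$, since $\mathcal{G}$ carries an atom at $1$. Once this is in hand, the $\frac{1}{n}$ and $\frac{c}{n\mu}$ terms cancel cleanly and the claim follows.
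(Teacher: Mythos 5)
Your proposal is correct and takes essentially the same route as the paper: both proofs rest on the same two facts (that $s = n\mu(1-a)^{n-1}$ makes the coefficient $(1-a)^{n-1}/s$ equal $1/(n\mu)$, and that the mean constraint on $\mathcal{G}$ gives $\int_0^1 y\,g(y)\,dy = \mu - c$) and both reduce the inequality by the same cancellation of $c/(n\mu)$, $(\mu-c)/(n\mu)$, and $1/n$. The only difference is cosmetic — you substitute and split the integral before dividing by $(1-a)^{n-1}$, whereas the paper divides first and substitutes afterward — so the arguments are algebraically identical.
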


\begin{proof}
\[\begin{split}
    \frac{c}{n\mu} + (1-a)^{n-1}\big[K - G(s)\big] + \int_{0}^{s}\frac{(1-a)^{n-1}}{s}yg(y)dy &> \frac{1}{n}\\
    \frac{c-\mu}{n\mu(1-a)^{n-1}} + K + \int_{0}^{s}\frac{1}{s}yg(y)dy &> G(s)\\
    \frac{c-\mu}{n\mu(1-a)^{n-1}} + K + \int_{0}^{1}\frac{1}{s}yg(y)dy &> G(s) + \int_{s}^{1}\frac{1}{s}yg(y)dy\\
    \frac{c-\mu}{n\mu(1-a)^{n-1}} + \frac{\mu - c}{s} + K  &> G(s) + \int_{s}^{1}\frac{1}{s}yg(y)dy\\
    K &> \int_{s}^{1}\frac{1}{s}yg(y)dy + G(s)\\
\end{split}\]

Where we used the fact that $\int_{0}^{1}(1/s)yg(y)dy = \mu - c$ and $s = n\mu(1-a)^{n-1}$.
\end{proof}

\subsection{Equation \texorpdfstring{\ref{l}}{TEXT} Derivation}\label{a0}

\begin{claim}
\[\mu \sum_{i=0}^{n-1}\binom{n-1}{i}\bigg(\frac{1}{n-i}\bigg)(1-a)^{i}a^{n-1-i} = \mu\frac{1 - (1-a)^{n}}{na}\]
\end{claim}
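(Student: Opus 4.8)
The plan is to reduce the binomial sum on the left to a truncated binomial expansion of $(a + (1-a))^n = 1$. The key algebraic fact I would establish first is the identity
\[
\frac{1}{n-i}\binom{n-1}{i} = \frac{1}{n}\binom{n}{i}, \qquad 0 \le i \le n-1,
\]
which follows immediately from $\binom{n}{i} = \tfrac{n}{n-i}\binom{n-1}{i}$ (a one-line check from the factorial definitions).

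Next I would substitute this into the left-hand side. After cancelling $\mu$ from both sides, the claim becomes
\[
\sum_{i=0}^{n-1}\binom{n-1}{i}\frac{1}{n-i}(1-a)^{i}a^{n-1-i} = \frac{1-(1-a)^{n}}{na},
\]
and the identity turns the left side into $\frac{1}{n}\sum_{i=0}^{n-1}\binom{n}{i}(1-a)^{i}a^{n-1-i}$. Pulling out a factor of $\tfrac{1}{a}$ (legitimate since $a>0$ in this regime — if $a=0$ the statement is handled separately as a limit, or one simply notes both sides equal $\mu/n$), this equals $\frac{1}{na}\sum_{i=0}^{n-1}\binom{n}{i}(1-a)^{i}a^{n-i}$.

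Finally I would invoke the binomial theorem: $\sum_{i=0}^{n}\binom{n}{i}(1-a)^{i}a^{n-i} = \big((1-a)+a\big)^{n} = 1$, so the truncated sum (missing only the $i=n$ term, which is $(1-a)^n$) equals $1 - (1-a)^{n}$. Substituting back gives $\frac{1-(1-a)^n}{na}$, and restoring the factor $\mu$ yields the claimed equality. There is no real obstacle here; the only thing to be careful about is the harmless $a>0$ caveat when dividing by $a$, and making sure the combinatorial identity is stated cleanly before it is used.
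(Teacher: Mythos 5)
Your proof is correct, and it takes a genuinely cleaner route than the paper's. The paper reindexes via $k = n-1-i$, factors out $(1-a)^{n-1}$ to produce $\mu(1-a)^{n-1}\sum_{k=0}^{n-1}\binom{n-1}{k}\frac{1}{k+1}\bigl(\frac{a}{1-a}\bigr)^k$, and then invokes the less familiar summation identity $\sum_{k=0}^{n}\frac{1}{k+1}\binom{n}{k}z^k = \frac{(z+1)^{n+1}-1}{(n+1)z}$ with $z = a/(1-a)$, after which some algebra recovers the claim. You instead absorb the awkward factor $\frac{1}{n-i}$ directly into the binomial coefficient via $\frac{1}{n-i}\binom{n-1}{i} = \frac{1}{n}\binom{n}{i}$, which turns the left side into a truncated expansion of $\bigl((1-a)+a\bigr)^n = 1$; the missing $i=n$ term is exactly $(1-a)^n$. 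This avoids both the reindexing and the auxiliary identity (which, incidentally, is usually proved by essentially the same $\binom{n}{k}/(k+1) = \binom{n+1}{k+1}/(n+1)$ trick you use). Your version is shorter, more elementary, and arguably the ``right'' proof; the only caveat you flag — dividing by $a$ — is indeed harmless since the $a=0$ case is trivially $\mu/n$ on both sides.
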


\begin{proof}
First, define $k \coloneqq n -1 -i$ and so we have 

\[\mu \sum_{i=0}^{n-1}\binom{n-1}{i}\bigg(\frac{1}{n-i}\bigg)(1-a)^{i}a^{n-1-i} = \mu (1-a)^{n-1}\sum_{k=0}^{n-1}\binom{n-1}{k}\frac{1}{k+1}\bigg(\frac{a}{1-a}\bigg)^{k}\]
Then, we have the identity. %(see \cite{ms1}).

\[\sum_{k=0}^n\frac{1}{k+1}\binom{n}{k}z^k=\frac{(z+1)^{n+1}-1}{(n+1)z}\]

and so we simply set $z \coloneqq \frac{a}{1-a}$, and after some algebra the proof is completed.
\end{proof}

\subsection{Lemma \ref{lem7} Proof}\label{a1}

\begin{proof}

Let Players $2$ through $n$ play $F_{i}$ supported on $[t,s]$ and have a point mass of size $a$ on $1$. Suppose for the sake of contradiction that $t > 0$. Recall,

\[F_{i}(x) = (1-a)\bigg(\frac{x-t}{s-t}\bigg)^{1/(n-1)}\]

Thus, the cdf of the maximum of this, $H \coloneqq F^{n-1}$, is

\[H(x) = (1-a)^{n-1}\bigg(\frac{x-t}{s-t}\bigg)\]

and

\[h(x) = (1-a)^{n-1}\bigg(\frac{1}{s-t}\bigg)\]

Let Player $1$ play some alternate strategy $G$ supported on $[0,s]$ such that the density is positive on some portion of $[0,t]$ and have a point mass of size $a$ on $1$. Then, player $1$'s expected payoff is:

\[\begin{split}
    u &= \frac{1 - (1-a)^{n}}{n} + \int_{t}^{s}\int_{t}^{y}h(x)g(y)dxdy\\
    &= \frac{1 - (1-a)^{n}}{n} + \int_{t}^{s}(1-a)^{n-1}\bigg(\frac{y}{s-t}\bigg)g(y)dy - \int_{t}^{s}(1-a)^{n-1}\bigg(\frac{t}{s-t}\bigg)g(y)dy\\
    &= \frac{1 - (1-a)^{n}}{n} + \int_{0}^{s}(1-a)^{n-1}\bigg(\frac{y}{s-t}\bigg)g(y)dy - \int_{0}^{t}(1-a)^{n-1}\bigg(\frac{y}{s-t}\bigg)g(y)dy\\ 
    &- \int_{0}^{s}(1-a)^{n-1}\bigg(\frac{t}{s-t}\bigg)g(y)dy + \int_{0}^{t}(1-a)^{n-1}\bigg(\frac{t}{s-t}\bigg)g(y)dy\\
    &= \frac{1 - (1-a)^{n}}{n} + \int_{0}^{s}(1-a)^{n-1}\bigg(\frac{y}{s-t}\bigg)g(y)dy - \int_{0}^{s}(1-a)^{n-1}\bigg(\frac{t}{s-t}\bigg)g(y)dy\\ 
    &- \int_{0}^{t}(1-a)^{n-1}\bigg(\frac{y}{s-t}\bigg)g(y)dy + \int_{0}^{t}(1-a)^{n-1}\bigg(\frac{t}{s-t}\bigg)g(y)dy\\
    &= \frac{1}{n} + \int_{0}^{t}(1-a)^{n-1}\bigg(\frac{t - y}{s-t}\bigg)g(y)dy > \frac{1}{n}
\end{split}\]

where we used,

\[\label{ha}\tag{$A1$}\begin{split}
    &\frac{1 - (1-a)^{n}}{n} + \int_{0}^{s}(1-a)^{n-1}\bigg(\frac{y}{s-t}\bigg)g(y)dy - \int_{0}^{s}(1-a)^{n-1}\bigg(\frac{t}{s-t}\bigg)g(y)dy\\
    &= \frac{1 - (1-a)^{n}}{n} + (1-a)^{n-1}\bigg(\frac{\mu - a}{s-t}\bigg) - (1-a)^{n}\bigg(\frac{t}{s-t}\bigg)\\
\end{split}\]

But, we can find $\mu - a$ explicitly. It is,

\[\begin{split}
    \mu - a &= \int_{t}^{s}yf(y)dy\\
            &= \int_{t}^{s}\frac{y}{y-t}\bigg(\frac{1-a}{n-1}\bigg)\bigg(\frac{y-t}{s-t}\bigg)^{1/(n-1)}dy\\
            &= \left.\dfrac{\left(1-a\right)\left(y-t\right)^\frac{1}{n-1}\left(y+\left(n-1\right)t\right)}{n\left(s-t\right)^\frac{1}{n-1}}\right\vert_{t}^{s}\\
            &= \dfrac{\left(1-a\right)\left(s+\left(n-1\right)t\right)}{n}
\end{split}\]

Substitute this into \ref{ha}:

\[\begin{split}
    &\frac{1 - (1-a)^{n}}{n} + (1-a)^{n-1}\bigg(\frac{\mu - a}{s-t}\bigg) - (1-a)^{n}\bigg(\frac{t}{s-t}\bigg)\\
    &= \frac{1 - (1-a)^{n}}{n} + (1-a)^{n}\bigg(\dfrac{\left(s+\left(n-1\right)t\right)}{n(s-t)}\bigg) - (1-a)^{n}\bigg(\frac{t}{s-t}\bigg)\\
    &= \frac{1 - (1-a)^{n}}{n} + (1-a)^{n}\bigg(\dfrac{\left(s+\left(n-1\right)t - nt\right)}{n(s-t)}\bigg)\\
    &= \frac{1 - (1-a)^{n}}{n} + (1-a)^{n}\bigg(\dfrac{s-t}{n(s-t)}\bigg)\\
    &= \frac{1}{n}
\end{split}\]

Thus, there is a profitable deviation and so $t$ must equal $0$.
\end{proof}

\subsection{Lemma \ref{lem8} Proof}\label{a2}

\begin{proof}

Recall that in Lemma \ref{blah} we show that $a \geq \mu - \mu(1-a)^{n}$. Thus, it is sufficient to show here that $a \leq \mu - \mu(1-a)^{n}$.

We divide the following into two cases. In the first case, suppose that $\mu \leq s$. Note that it cannot be a profitable deviation for a player to play a strategy consisting of $s$ played with probability $\mu/s$ and $0$ with probability $1 - \mu/s$. This condition is equivalent to \ref{23}:

\[\tag{$A2$}\label{23}\begin{split}
    \frac{1}{n} &\geq \frac{(1-a)^{n-1}\mu}{s}\\
\end{split}\]

Or, 

\[\tag{$A3$}\label{237}\begin{split}
    s &\geq n (1-a)^{n-1}\mu\\
\end{split}\]

From \ref{2}, and using the fact that $t = 0$, we have

\[a= \frac{n\mu - s}{n - s}\]

Or,

\[\label{123}\tag{$A4$} s = \frac{n(\mu-a)}{(1-a)}\]

We substitute this into \ref{237} and obtain

\[\begin{split}
    \frac{n(\mu-a)}{(1-a)} &\geq n (1-a)^{n-1}\mu\\
    \mu - \mu(1-a)^{n} &\geq a
\end{split}\]

For the second case, suppose now that $\mu > s$. By similar logic to the above, it cannot be a profitable deviation for a player to play a strategy consisting of $s$ played with probability $(1-\mu)/(1-s)$ and $1$ with probability $(\mu-s)/(1-s)$. That is,

\[\label{8}\tag{$A5$}\frac{1}{n} \geq \bigg(\frac{1-\mu}{1-s}\bigg)(1-a)^{n-1} + \bigg(\frac{\mu - s}{1-s}\bigg)\bigg(\frac{1 - (1-a)^{n}}{na}\bigg)\]

Suppose for the sake of contradiction that $a > \mu - \mu(1-a)^{n}$. Additionally, for convenience, define $k \coloneqq (1-a)^{n}$. \ref{8} can be rearranged to obtain:

\[s\big(1-k-a\big) \geq a n (1-\mu)\frac{k}{(1-a)} + \mu(1-k) - a\]

We substitute \ref{123} and rearrange to obtain:

\[\label{77}\tag{$A6$}\begin{split}
    n(\mu-a)\big(1-k-a\big) &\geq ank(1-\mu) + \mu(1-k)(1-a) - a(1-a)\\
    \mu\big(n+a-an-1\big) - (1-a)a(n-1) &\geq \mu k \big((n-1)(1-a)\big)
\end{split}\]

Our assumption above that $a > \mu - \mu(1-a)^{n}$ is equivalent to $\mu k > \mu - a$. We substitute this into \ref{77} and cancel:

\[\begin{split}
    \mu\big(n+a-an-1\big) - (1-a)a(n-1) &\geq \mu k \big((n-1)(1-a)\big)\\
    \mu\big(n+a-an-1\big) - (1-a)a(n-1) &> (\mu - a)\big((n-1)(1-a)\big)\\
    0 &> 0
\end{split}\]

We have achieved a contradiction and have thereby shown that $a \geq \mu - \mu(1-a)^{n}$. This, combined with Lemma \ref{blah} allows us to conclude the result, that $a = \mu - \mu(1-a)^{n}$.

\end{proof}

\end{document}